\documentclass[12pt]{amsart}
\usepackage{amscd}
\usepackage{amsmath}
\usepackage{amsxtra}
\usepackage{amsfonts}
\usepackage{amssymb}
\usepackage{amsthm}
\usepackage{inputenc}
\usepackage[english]{babel}
\usepackage{blindtext}
\usepackage{bbm}
\usepackage{tikz}
\usepackage{float}
\usetikzlibrary{arrows}
\oddsidemargin  0.0in
	\evensidemargin 0.0in
	\textwidth      6.5in
	\headheight     0.0in
	\topmargin      0.0in
 
\newtheorem{theorem}{Theorem}[section]
\newtheorem{corollary}[theorem]{Corollary}
\newtheorem{lemma}[theorem]{Lemma}
\newtheorem{proposition}[theorem]{Proposition}

\theoremstyle{definition}
\newtheorem{definition}[theorem]{Definition}
\newtheorem{remark}[theorem]{Remark}

\newtheorem{example}[theorem]{Example}
\theoremstyle{remark}

\renewcommand{\theclaim}{\textup{\theclaim}}

\numberwithin{equation}{section}

\def\openone

{\mathchoice

{\hbox{\upshape \small1\kern-3.3pt\normalsize1}}

{\hbox{\upshape \small1\kern-3.3pt\normalsize1}}

{\hbox{\upshape \tiny1\kern-2.3pt\SMALL1}}

{\hbox{\upshape \Tiny1\kern-2pt\tiny1}}}

\makeatletter

\newbox\ipbox

\newcommand{\ip}[2]{\left\langle #1\, , \,#2\right\rangle}
\newcommand{\diracb}[1]{\left\langle #1\mathrel{\mathchoice

{\setbox\ipbox=\hbox{$\displaystyle \left\langle\mathstrut
#1\right.$}

\vrule height\ht\ipbox width0.25pt depth\dp\ipbox}

{\setbox\ipbox=\hbox{$\textstyle \left\langle\mathstrut
#1\right.$}

\vrule height\ht\ipbox width0.25pt depth\dp\ipbox}

{\setbox\ipbox=\hbox{$\scriptstyle \left\langle\mathstrut
#1\right.$}

\vrule height\ht\ipbox width0.25pt depth\dp\ipbox}

{\setbox\ipbox=\hbox{$\scriptscriptstyle \left\langle\mathstrut
#1\right.$}

\vrule height\ht\ipbox width0.25pt depth\dp\ipbox}

}\right. }

\newcommand{\dirack}[1]{\left. \mathrel{\mathchoice

{\setbox\ipbox=\hbox{$\displaystyle \left.\mathstrut
#1\right\rangle$}

\vrule height\ht\ipbox width0.25pt depth\dp\ipbox}

{\setbox\ipbox=\hbox{$\textstyle \left.\mathstrut
#1\right\rangle$}

\vrule height\ht\ipbox width0.25pt depth\dp\ipbox}

{\setbox\ipbox=\hbox{$\scriptstyle \left.\mathstrut
#1\right\rangle$}

\vrule height\ht\ipbox width0.25pt depth\dp\ipbox}

{\setbox\ipbox=\hbox{$\scriptscriptstyle \left.\mathstrut
#1\right\rangle$}

\vrule height\ht\ipbox width0.25pt depth\dp\ipbox}

} #1\right\rangle}

\newcommand{\cj}[1]{\overline{#1}}

\newcommand{\bz}{\mathbb{Z}}

\newcommand{\br}{\mathbb{R}}
\newcommand{\bc}{\mathbb{C}}

\newcommand{\bn}{\mathbb{N}}

\newcommand{\beq}{\begin{equation}}
\newcommand{\eeq}{\end{equation}}

\def\blfootnote{\xdef\@thefnmark{}\@footnotetext}


\renewcommand{\mod}{\operatorname{mod}}

\hyphenation{wave-lets}\hyphenation{ in-fi-nite}\hyphenation{ con-vo-lu-tion}

\input xy
\xyoption{all}
\usepackage{amssymb}





\def\-{^{-1}}

\def\g{\mathcal{G}}

\def\ty{\emptyset}






\begin{document}
\date{}

\title{Weighted Fourier Frames on Self-Affine Measures}
\author{Dorin Ervin Dutkay}

\address{[Dorin Ervin Dutkay] University of Central Florida\\
	Department of Mathematics\\
	4000 Central Florida Blvd.\\
	P.O. Box 161364\\
	Orlando, FL 32816-1364\\
U.S.A.\\} \email{Dorin.Dutkay@ucf.edu}

\author{Rajitha Ranasinghe}

\address{[Rajitha Ranasinghe] University of Central Florida\\
	Department of Mathematics\\
	4000 Central Florida Blvd.\\
	P.O. Box 161364\\
	Orlando, FL 32816-1364\\
U.S.A.\\} \email{rajitha13@knights.ucf.edu }

\thanks{} 
\subjclass[2010]{42B05, 42A85, 28A25}    
\keywords{Cantor set, Fourier frames, iterated function systems, Markov chains}

\begin{abstract}
Continuing the ideas from our previous paper \cite{DR16}, we construct Parseval frames of weighted exponential functions for self-affine measures.
 
\end{abstract}
\maketitle \tableofcontents

\section{Introduction}

A probability measure $\mu$ on $\br$ is called \emph{spectral} if there exists a sequence of exponential functions which form an orthonormal basis for $L^2(\mu)$. Of course, the main example is the Lebesgue measure on the unit interval with the classical Fourier series. In 1998, Jorgensen and Pedersen \cite{JP98} constructed the first example of a singular, non-atomic spectral measure, based on a Cantor set with scale 4. Since then, many other examples of spectral singular measures have been constructed (see e.g., \cite{Str00, LW02, DJ06,DJ07}), most of them are based on affine iterated function systems (see Definition  \ref{defns}). In the same paper, Jorgensen and Pedersen showed that the Hausdorff measure on the Middle Third Cantor set is not spectral and Strichartz \cite{Str00} posed the question whether there are any frames of exponential functions for the Middle Third Cantor set. As far as we know, this question is still open.

In search of a frame for the Middle Third Cantor set, in \cite{PW15}, Picioroaga and Weber introduced an interesting idea for the construction of weighted exponential frames (also called weighted Fourier frames)  for the self-affine measures, in particular for the Cantor set $C_4$ in Jorgensen and Pedersen's example. The word ``weighted'' means that the exponential function is multiplied by a constant. The basic idea is to use Cuntz algebras to construct an orthonormal set for a dilation of the Hilbert space of the fractal measure, which then projects into a Parseval frame of weighted exponential functions. In \cite{DR16}, the authors generalized the aforementioned idea of Picioroaga and Weber to construct Parseval Fourier frames for self-affine measures (see Definition \ref{defns}). 

The present paper is a continuation of the paper \cite{DR16}. Here we refine the main result in \cite{DR16} by removing some conditions in the hypothesis and constructing new families of weighted Fourier frames for self-affine measures. 

We begin with some definitions and we recall the main notions that allow us to formulate the main result. In section 2 we present the proof of the result and in section 3 we present several examples.

\begin{definition}\label{defns}

For a given integer $R\geq 2$ and a finite set of integers $B$ with cardinality $|B|=:N,$ we define the \emph{affine iterated function system} (IFS) $\tau_{b}(x)=R^{-1}(x+b), x \in \br, b \in B.$ The \emph{self-affine measure} (with equal weights) is the unique probability measure $\mu = \mu(R, B)$ satisfying 
\begin{equation}\label{self-affine}
\mu(E)=\frac{1}{N} \sum_{b \in B} \mu(\tau_{b}^{-1}(E)),~~\textnormal{for all Borel subsets}~E~\textnormal{of}~\mathbb{R}.
\end{equation}
This measure is supported on the {\it attractor} $X_B$ which is the unique compact set that satisfies
$$
X_B= \bigcup_{b\in B} \tau_b(X_B).
$$
The set $X_B$ is also called the {\it self-affine set} associated with the IFS, and it can be described as 
$$X_B=\left\{\sum_{k=1}^\infty R^{-k}b_k : b_k\in B\right\}.$$

One can refer to \cite{Hut81} for a detailed exposition of the theory of iterated function systems. We say that $\mu = \mu(R,B)$ satisfies the {\it no overlap condition} if
$$
\mu(\tau_{b}(X_B)\cap \tau_{b'}(X_B))=0, \ \forall~b\neq b'\in B.
$$
For $\lambda\in \mathbb{R}$, define
$$e_\lambda(x)=e^{2\pi i\lambda x},~~ (x\in \mathbb{R}).$$
For a Borel probability measure $\mu$ on $\br$ we define its \emph{Fourier transform} by
$$\widehat\mu(t)=\int e^{2\pi it x}\,d\mu,\quad(t\in \mathbb{R}).$$

A \emph{frame} for a Hilbert space $H$ is a family $\{e_i\}_{i\in I}\subset H$ such that there exist constants $A,B>0$ such that for all $v\in H$,
$$A\|v\|^2\leq \sum_{i\in I}|\ip{v}{e_i}|^2\leq B\|v\|^2.$$
The largest $A$ and smallest $B$ which satisfy these inequalities are called the \emph{frame bounds}. The frame is called a \emph{Parseval} frame if both frame bounds are $1$. 
\end{definition}

\noindent
{\bf Assumptions 1.1.}

Assume that there exists a finite set $L \subset \mathbb{Z}$ with $0 \in L, |L|=:M$ and complex numbers $\left( \alpha_l \right)_{l \in L}$ such that the following properties are satisfied:
\begin{enumerate}
\item $\alpha_{0}=1.$
\item The matrix
\begin{equation}\label{matrix_T}
T := \frac{1}{\sqrt{N}} \left( e^{2 \pi i R^{-1}l \cdot b}\alpha_l \right)_{l \in L, b \in B}
\end{equation}
is an isometry, i.e., $T T^{*}=I_{N},$ i.e., its columns are orthonormal, which means that 
\begin{equation}
\frac{1}{N}\sum_{l\in L}|\alpha_l|^2e^{2\pi i R^{-1}l\cdot (b-b')}=\delta_{b,b'},\quad (b,b'\in B).
\label{eq2.2}
\end{equation}
\end{enumerate}

\bigskip
\begin{definition}\label{defcong}
For $k\in\bz$, we denote 
$$[k]:=\{k'\in\bz : (k'-k)\cdot R^{-1}b\in\bz, \mbox{ for all } b\in B\}.$$
We denote by $[L]:=\{[l] : l\in L\}$. 
\end{definition}

\bigskip
\begin{definition}\label{def1}

Let 
\begin{equation}
m_B(x)=\frac{1}{N}\sum_{b\in B}e^{2\pi i bx},\quad(x\in\br).
\label{eqmb}
\end{equation}
With the notations of Theorem 1.4 in \cite[p.1606]{DR16} a set $\mathcal M\subset \br$ is called {\it invariant} if for any point $t\in \mathcal M$, and any $l\in L$, if $\alpha_{l}m_B((R^T)^{-1}(t-l))\neq 0$, then $g_{l}(t):=(R^T)^{-1}(t-l)\in \mathcal M$. $\mathcal M$ is said to be non-trivial if $\mathcal M\neq \{0\}$.  We call a finite {\it minimal invariant} set a {\it min-set}.

Note that 
\begin{equation}
\sum\limits_{l\in L} | \alpha_{l}  |^2 \ |m_{B}(g_{l} (t))|^{2}=1 \quad (t \in \mathbb{R}^d ),
\end{equation}
(see (3.2) in \cite[p.1615]{DR16}), and therefore, we can interpret the number $| \alpha_{l}  |^2 \ |m_{B}(g_{l} (t))|^{2}$ as the probability of transition from $t$ to $g_{l}(t)$, and if this number is not zero then we say that this {\it transition is possible in one step (with digit $l$)}, and we write $t\rightarrow g_{l}(t)$ or $t\stackrel{l}{\rightarrow}g_l(t)$. We say that the {\it transition is possible} from a point $t$ to a point $t'$ if there exist $t_0=t$, $t_1,\dots, t_n=t'$ such that $t=t_0\rightarrow t_1\rightarrow\dots\rightarrow t_n=t'$. The {\it trajectory} of a point $t$ is the set of all points $t'$ (including the point $t$) such that the transition is possible from $t$ to $t'$.

A {\it cycle} is a finite set $\{t_0,\dots,t_{p-1}\}$ such that there exist $l_0,\dots, l_{p-1}$ in $L$ such that $g_{l_0}(t_0)=t_1,\dots, g_{l_{p-1}}(t_{p-1})=t_{p}:=t_0$. Points in a cycle are called {\it cycle points}. 

A cycle $\{t_0,\dots, t_{p-1}\}$ is called {\it extreme} if $|m_B(t_i)|=1$ for all $i$; by the triangle inequality, since $0\in B$, this is equivalent to $t_i\cdot b\in \bz$ for all $b\in B$. 
\end{definition}

The next proposition gives some information about the structure of finite, minimal sets, which makes it easier to find such sets in concrete examples.

\begin{proposition}\label{pr3}\cite{DR16}
Assume $\alpha_l\neq 0$ for all $l\in L$. 
Let $\mathcal M $ be a non-trivial finite, minimal invariant set. Then, for every two points $t,t'\in \mathcal M $ the transition is possible from $t$ to $t'$ in several steps. In particular, every point in the set $\mathcal M $ is a cycle point. The set $\mathcal M $ is contained in the interval $\left[\frac{\min(-L)}{R-1},\frac{\max(-L)}{R-1}\right]$. 

If $t$ is in $\mathcal M $ and if there are two possible transitions $t\rightarrow g_{l_1}(t)$ and $t\rightarrow g_{l_2}(t)$, then $l_1\equiv l_2(\mod R)$.

Every point $t$ in $\mathcal M $ is an extreme cycle point, i.e., $|m_B(t)|=1$ and if $t\rightarrow g_{l_0}(t)$ is a possible transition in one step, then $[l_0]\cap L=\{l\in L : l\equiv l_0(\mod R)\}$ (with the notation in Definition \ref{defcong}) and
\begin{equation}
\sum_{l\in L,l\equiv l_0(\mod R)}|\alpha_l|^2=1.
\label{eqac}
\end{equation}

In particular $t\cdot b\in\bz$ for all $b\in B$. 
\end{proposition}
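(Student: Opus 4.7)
The plan is to address the six claims in three stages: first the combinatorial statements (i)--(iii), then the extremality $|m_B(t)|=1$ (the technical heart), and finally the mod-$R$ congruence statements (iv), (vi), and the concluding assertion.

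\emph{Stage 1.} For any $t \in \mathcal{M}$ the trajectory $\orb(t)$ is itself invariant in the sense of Definition~\ref{def1}: by construction, any one-step transition from an element of $\orb(t)$ with non-zero coefficient lies in $\orb(t)$. Since $\orb(t) \subseteq \mathcal{M}$ is non-empty and invariant, minimality of $\mathcal{M}$ forces $\orb(t)=\mathcal{M}$, proving (i); finiteness of $\mathcal{M}$ then forces $t$ to reappear in its own trajectory, i.e.\ $t$ is a cycle point, proving (ii). For (iii), if $t=t_0$ is on a cycle with digits $l_0,\ldots,l_{p-1}\in L$, iterating $g_l(x)=R^{-1}(x-l)$ and solving $t_p=t_0$ yield
\[
t_0=-\frac{1}{R^p-1}\sum_{k=0}^{p-1} R^k\, l_k,
\]
and bounding each $l_k$ between $\min L$ and $\max L$ together with $\sum_{k=0}^{p-1}R^k=(R^p-1)/(R-1)$ places $t_0$ in the advertised interval.

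\emph{Stage 2.} This is the main obstacle. The identity $\sum_l|\alpha_l|^2|m_B(g_l(t))|^2=1$ combined with invariance and $\alpha_l\ne 0$ exhibits $\mathcal{M}$ as the state space of an irreducible stochastic Markov chain with transition $t \to g_l(t)$ of probability $|\alpha_l|^2|m_B(g_l(t))|^2$. Its unique strictly positive stationary distribution $\pi$, substituted into the stationarity equation, immediately rules out $|m_B(t)|=0$ on $\mathcal{M}$. Upgrading this to $|m_B(t)|=1$ is the delicate step; my plan is to show that $\mathcal{M}^e:=\{t\in\mathcal{M}:|m_B(t)|=1\}$ is both non-empty and invariant in the sense of Definition~\ref{def1}, so that minimality of $\mathcal{M}$ gives $\mathcal{M}^e=\mathcal{M}$. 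Non-emptiness I would extract from the cycle-point formula combined with the integrality of $B\subset\bz$. Invariance is the harder step: starting from $t\in\mathcal{M}^e$ and a possible transition $t\to g_l(t)$, I expect to iterate the identity around a cycle through $g_l(t)$ and use the Cuntz-type orthogonality (\ref{eq2.2}) to force $|m_B(g_l(t))|$ to equal $1$ whenever it is non-zero.

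\emph{Stage 3.} Once $|m_B(t)|=1$ throughout $\mathcal{M}$, invariance gives the dichotomy $|m_B(g_l(t))|^2\in\{0,1\}$ for every $l\in L$ and $t\in\mathcal{M}$: either $g_l(t)\notin\mathcal{M}$, forcing $m_B(g_l(t))=0$ (from invariance and $\alpha_l\neq 0$), or $g_l(t)\in\mathcal{M}$, forcing $|m_B(g_l(t))|=1$ by Stage 2. The identity collapses to
\[
\sum_{l\in\mathrm{Poss}(t)}|\alpha_l|^2=1, \qquad \mathrm{Poss}(t):=\{l\in L:\alpha_l m_B(g_l(t))\ne 0\}.
\]
A direct computation using $tb\in\bz$ gives $|m_B(g_l(t))|=1\iff bl\equiv bt\pmod{R}$ for every $b\in B$, i.e.\ $l\in[l_0]$ for any reference $l_0\in\mathrm{Poss}(t)$. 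Thus $\mathrm{Poss}(t)=[l_0]\cap L$ and (\ref{eqac}) holds over this set. The remaining identification $[l_0]\cap L=\{l\in L:l\equiv l_0\pmod R\}$, which implies (iv), follows by comparing the unit-sum constraint with the Fourier expansion of the indicator of a mod-$R$ residue class and invoking (\ref{eq2.2}). The final line $t\cdot b\in\bz$ is just $|m_B(t)|=1$ using $0\in B$.
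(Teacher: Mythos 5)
Your Stage 1 is correct, but the heart of the proposition --- that every point of a min-set satisfies $|m_B(t)|=1$ --- is left as an unexecuted plan in Stage 2, and the two pillars of that plan do not hold up as described. (Note also that the paper imports this proposition from \cite{DR16} without reproving it, so there is no in-paper proof to compare against; I am judging the proposal on its own terms.) First, non-emptiness of $\mathcal{M}^e$ cannot be ``extracted from the cycle-point formula combined with the integrality of $B$'': that formula only shows a cycle point is a rational with denominator dividing $R^p-1$, and such a point need not be extreme. For instance, with $R=4$, $B=\{0,2\}$, the fixed point of $g_1$ is $t_0=-1/3$, which satisfies the cycle formula and has $m_B(t_0)\neq 0$ but $|m_B(t_0)|=1/2$. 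Extremality genuinely requires the isometry hypothesis \eqref{eq2.2}, which your Stage 2 never actually invokes beyond an expression of hope (``I expect to iterate the identity\dots''). Second, invariance of $\mathcal{M}^e$ is not easier than the full statement: knowing $tb\in\bz$ for all $b\in B$ gives no direct control on $(t-l)b/R$, and I see no way to run your induction without already having the conclusion.

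Here is the missing mechanism, so you can see concretely what the plan lacks. From the cycle formula, every point of $\mathcal{M}$ is a rational with denominator coprime to $R$; hence if $t\rightarrow g_{l_1}(t)$ and $t\rightarrow g_{l_2}(t)$ are both possible, $g_{l_1}(t)-g_{l_2}(t)=(l_2-l_1)/R$ is such a rational, forcing $l_1\equiv l_2\pmod R$ --- this is claim (iv), and it must come \emph{before} extremality, not after it as in your Stage 3. Since $m_B$ is $1$-periodic, all possible targets $g_l(t)$ then share the same value of $|m_B|$, so the identity $\sum_l|\alpha_l|^2|m_B(g_l(t))|^2=1$ collapses to $|m_B(g_{l_0}(t))|^2\sum_{l\equiv l_0\,(\mod R)}|\alpha_l|^2=1$, whence $\sum_{l\equiv l_0\,(\mod R)}|\alpha_l|^2\geq 1$. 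The reverse inequality $c_r:=\sum_{l\in L,\,l\equiv r\,(\mod R)}|\alpha_l|^2\leq 1$ for every residue $r$ is exactly where \eqref{eq2.2} enters: grouping the sum in \eqref{eq2.2} by residues mod $R$ gives $\sum_r c_r u_ru_r^*=I_N$ with $u_r=N^{-1/2}\left(e^{2\pi i rb/R}\right)_{b\in B}$ unit vectors, and positivity of each summand yields $c_r=u_r^*\left(c_ru_ru_r^*\right)u_r\leq u_r^*u_r=1$. Equality then forces $|m_B(g_{l_0}(t))|=1$; since every point of $\mathcal{M}$ is a successor of some point (irreducibility), $|m_B|\equiv1$ on $\mathcal{M}$, and \eqref{eqac} together with the identification of $[l_0]\cap L$ falls out. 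Without some version of this (or another genuine use of \eqref{eq2.2}), Stages 2 and 3 remain a program rather than a proof.
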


\medskip
\noindent
\begin{definition}
Let $c$ be an extreme cycle point in some finite minimal invariant set. A word $l_0 \dots l_{p-1}$ in $L$ is called a \emph{cycle word} for $c$ if $g_{l_{p-1}} \dots g_{l_{0}}(c)=c$
and $g_{l_k} \dots g_{l_{0}}(c) \neq c$ for $0 \leq k < p-1,$ and the transitions $c \rightarrow g_{l_{0}}(c) \rightarrow g_{l_{1}}g_{l_{0}}(c) \rightarrow \dots \rightarrow g_{l_{p-2}}\dots g_{l_{0}}(c)\rightarrow  g_{l_{p-1}}\dots g_{l_{0}}(c)=c$ are possible. 
\end{definition}

For every finite minimal invariant set $\mathcal{M},$ pick a point $c(\mathcal{M})$ in $\mathcal{M}$ and define $\Omega(c(\mathcal{M}))$ to be the set of finite words with digits in $L$ that do not end in a cycle word for $c(\mathcal{M})$, i.e., they are not of the form $\omega\omega_0$ where $\omega_0$ is a cycle word for $c$ and $\omega$ is an arbitrary word with digits in $L$.

\medskip
\noindent
\begin{theorem}\label{th1.6}
Suppose $(R,B,L)$ and $(\alpha_l)_{l\in L}$ satisfy the Assumptions 1.1. Then the set 
$$\Bigg\{ \left( \prod_{j=0}^{n} \alpha_{l_j} \right) e_{l_{0}+R l_{1} + \dots + R^k l_k + R^{k+1} c(\mathcal{M}) }  : l_0 \dots l_n \in \Omega(c(\mathcal{M})), \mathcal{M}~\textnormal{is a min-set}   \Bigg\}$$
is a Parseval frame for $L^2(\mu(R,B)).$
\end{theorem}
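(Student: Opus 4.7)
The plan is to continue the Cuntz-algebra framework of \cite{DR16}. The isometry condition in Assumption~1.1 encodes a representation $\{V_l\}_{l\in L}$ on $L^2(\mu)$ whose generators act on exponentials by
\[
V_l\,e_t^{\mu}=\alpha_l\,e_{l+Rt}^{\mu}\quad(t\in\mathbb R),
\]
and whose adjoints implement the self-similarity \eqref{self-affine} of $\mu$. A direct computation using \eqref{eq2.2} gives the Cuntz relations $\sum_{l\in L}V_lV_l^*=I_{L^2(\mu)}$ and $V_l^*V_{l'}=\delta_{l,l'}I$, so the iterated words $V_\omega:=V_{l_0}V_{l_1}\cdots V_{l_n}$ satisfy
\[
V_\omega\,e_{c(\mathcal M)}^{\mu}=\alpha(\omega)\,e_{\lambda(\omega)+R^{n+1}c(\mathcal M)}^{\mu},
\]
producing exactly the frame vectors appearing in Theorem~\ref{th1.6}.

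The Parseval identity then follows by iterating these relations. First I would establish, for every $n\ge 1$ and every $f\in L^2(\mu)$,
\[
\|f\|_\mu^2=\sum_{|\omega|=n}\|V_\omega^*\,f\|_\mu^2,
\]
a routine consequence of $\sum_l V_lV_l^*=I$ applied $n$ times. I would then split the right-hand side according to the asymptotic behavior of the trajectory $t_0\to t_1\to\cdots$ generated by the $g_{l_j}$: trajectories that eventually fall into a finite minimal invariant set $\mathcal M$ are handled via the cycle structure of $\mathcal M$, while trajectories that escape give vanishing contribution as $n\to\infty$. For this I would use that the relevant values of $|\hat{f\mu}|$ decay by Riemann--Lebesgue type arguments, and that Proposition~\ref{pr3} confines the ``trapping region'' to the compact interval $\left[\tfrac{\min(-L)}{R-1},\tfrac{\max(-L)}{R-1}\right]$.

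For a fixed min-set $\mathcal M$ with distinguished point $c=c(\mathcal M)$, the role of $\Omega(c)$ is to select canonical representatives of trajectories that eventually cycle at $c$. If $\omega_0=l_0\cdots l_{p-1}$ is a cycle word at $c$, the identity $g_{l_{p-1}}\cdots g_{l_0}(c)=c$ rewrites as $\lambda(\omega_0)+R^p c=c$, so appending $\omega_0$ to any prefix $\omega'$ leaves the exponent $\lambda(\omega'\omega_0)+R^{|\omega'\omega_0|}c=\lambda(\omega')+R^{|\omega'|}c$ unchanged; restricting to $\omega\in\Omega(c)$ exactly kills this redundancy. Combining the level-$n$ identity with the extreme cycle property $|m_B|\equiv 1$ on $\mathcal M$ and the summation identity \eqref{eqac}, the trapped contributions telescope in the $n\to\infty$ limit and yield Parseval.

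The main obstacle is the bookkeeping in this last telescoping step: showing that the contributions from trajectories trapped in the \emph{same} min-set reorganize exactly into the sum over $\Omega(c(\mathcal M))$, while trajectories trapped in \emph{different} min-sets contribute to disjoint parts of $\|f\|_\mu^2$. The former hinges on using \eqref{eqac} to evaluate one turn around a cycle as a probability-$1$ event; the latter requires verifying that distinct min-sets induce orthogonal ``asymptotic projections'' in the level-$n$ decomposition. This is exactly the feature that distinguishes Theorem~\ref{th1.6} from the main result of \cite{DR16}, where only a single min-set was allowed to contribute.
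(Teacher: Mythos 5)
Your opening step is where the argument breaks, and it breaks fatally. Assumption~1.1 only makes the $|L|\times|B|$ matrix $T$ an \emph{isometry} (its columns are orthonormal, \eqref{eq2.2}); in general $|L|>|B|$ (e.g.\ Example~\ref{ex1} has $|B|=2$, $|L|=3$), so the rows of $T$ cannot be orthonormal. For operators of the form $(V_lf)(x)=\alpha_l e^{2\pi i lx}f(\mathcal Rx)$ on $L^2(\mu)$, the column condition \eqref{eq2.2} yields $\sum_{l}V_lV_l^*=I$, but the relation $V_l^*V_{l'}=\delta_{l,l'}I$ is equivalent to \emph{row} orthonormality and therefore fails. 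So the family $\{V_\omega e_{c}\}$ is not orthonormal, and indeed cannot be: as computed in Example~\ref{ex1}, the resulting exponentials $e_3$ and $e_{75}$ are not orthogonal in $L^2(\mu_B)$, which directly contradicts what your claimed Cuntz relations on $L^2(\mu)$ would force. This is precisely the reason the paper dilates: it completes $T$ with zero rows to an index set $B\times B'$, extends that isometry to a genuine unitary, builds the Cuntz isometries $S_{(b,b')}$ on the larger space $L^2(\mu\times\mu')$, proves orthonormality of $\{S_\omega e_{c(\mathcal M)}\}$ there (which requires a separate lemma that $\ip{e_c}{e_{c'}}=0$ for distinct extreme cycle points --- a step you do not address), and only then projects onto $V\cong L^2(\mu)$ to get a Parseval frame of non-orthogonal weighted exponentials.

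Two further steps in your completeness argument would also fail as stated. First, ``Riemann--Lebesgue type'' decay of $\widehat{f\mu}$ is unavailable: $\mu$ is a singular measure and its Fourier transform need not vanish at infinity (this failure is the whole subject of the paper). The paper instead proves completeness by showing $h(t)=\|P_{\mathcal K}e_t\|^2$ extends to an entire function satisfying the invariance equation \eqref{eqn_6}, and then rules out a finite zero set of $h-\min h$ because any finite invariant set must contain a min-set $\mathcal M$ on which $h(c(\mathcal M))=1$. Second, evaluating ``one turn around a cycle as a probability-$1$ event'' is not a consequence of \eqref{eqac} alone; the identity \eqref{eqn_5}, $\sum_{\text{cycle words}}\prod_k|\alpha_{\beta_k}|^2=1$, is proved via recurrence of the finite irreducible Markov chain on $\mathcal M$, and this is the place where the multiple-min-set structure (the new content relative to \cite{DR16}) actually enters. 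As written, your proposal does not reach the theorem.
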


Theorem \ref{th1.6} improves Theorem 1.4 in \cite{DR16}, where we assumed that there are no non-trivial min-sets. As we see here, if there are some non-trivial min-set, then each such set has some contribution to the Parseval frame of exponential function.

\bigskip
\section{Proofs}
The beginning of the proof follows the steps in the proof of Theorem 1.4 in \cite{DR16}. 
Consider the system with scaling $R' = N '$ where $N' \in \mathbb{Z}$ and $N N' \geq M,$ and digits $B'=\{ 0, 1, \dots , N'-1  \}.$ Define the iterated function system
$$\tau_{b'}(x')={R'}^{-1}(x' + b') ~~~~~~~ (x' \in \mathbb{R}, b' \in B').$$
The invariant measure for this iterated function system is the Lebesgue measure on $[0,1]$ and we denote it by $\mu' .$ The attractor of this iterated function system is $X_{B'}=[0,1].$ We can identify $L$ with a subset $L'$ of $B \times B'$ by some injective function $\iota ,$ in such a way that $0$ from $L$ corresponds to $(0,0)$ from $B \times B',$ and we define $l(b,b')=l$ if $(b,b')=\iota(l),$ $l(b,b')=0$ if $(b,b') \notin L',$ and $\alpha_{(b,b')}=\alpha_{l}$ if $(b,b')=\iota(l)$ and $\alpha_{(b,b')}=0$ if $(b,b') \notin L'.$ 

In other words, we complete the matrix $T$ in \eqref{matrix_T} with some zero rows, so that the rows are now indexed by $B \times B'.$ Of course, the isometry property is preserved, and $\alpha_{(0,0)}=0,~l(0,0)=0.$

As in pages 1607-1609 in \cite{DR16}, one can construct numbers $a_{(b,b'), (c,c')},~(b,b'), (c,c') \in B \times B'$  with the following properties:
\begin{enumerate}
\item The matrix 
\begin{equation}
\frac{1}{\sqrt{N N'}} \left(a_{(b,b'), (c,c')} e^{2 \pi i R^{-1} l(b,b') \cdot c} \right)_{(b,b'), (c,c') \in B \times B'}, 
\end{equation}
is unitary and the first row, corresponding to $(b,b')=(0,0)$, is constant $\frac{1}{\sqrt{N N'}}.$ So $a_{(0,0), (c,c')}=1$  for all $(c,c') \in B \times B' .$

\item For all $(b,b') \in B \times B', c \in B,$
\begin{equation}
\frac{1}{N'} \sum_{c \in B'} a_{(b,b'), (c,c')}=\alpha_{(b,b')}.
\end{equation}

\end{enumerate}

Next, we construct some Cuntz isometries $S_{(b,b')}, ~(b,b') \in B \times B'$ in the dilation space $L^2 (\mu \times \mu'),$ and with them, we construct an orthonomal set, by applying the Cuntz isometries to the functions $e_{c}$ for points $c$ in each min-set.

Recall that some operators $\{ S_{(b,b')} : (b,b')\in B\times B'\}$ on a Hilbert space $H$, are called Cuntz isometries if they satisfy the relations
$$S_{(b,b')}^*S_{(c,c')}=\delta_{(b,b'),(c,c')}I_H,\quad \sum_{(b,b')\in B\times B'}S_{(b,b')}S_{(b,b')}^*=I_H.$$

Next, we claim that the measure $\mu_B$ has no overlap. This follows from \cite[Theorem 2.2 and Proposition 2.3]{DHL16}, if we show that the elements in $B$ are not congruent modulo $R$. But if $b\equiv b'(\mod R)$, for $b\neq b'$ in $B$, then, using \eqref{eq2.2}, we get that 
$$\sum_{l\in L}|\alpha_l|^2=0,$$
a contradiction. 

Define now, the maps $\mathcal R: X_{B} \rightarrow X_{B}$ by 
\begin{equation}
\mathcal R x = Rx - b,~~\textnormal{if}~x \in \tau_{b}(X_B),
\end{equation}
and $\mathcal R': X_{B'} \rightarrow X_{B'}$ by
\begin{equation}
\mathcal R'x' = R'x' - b',~~\textnormal{if}~x' \in \tau_{b'}(X_{B'}).
\end{equation}

Note that $\mathcal R(\tau_b x) =x$ for all $x \in X_B$ and $\mathcal R'(\tau_b' x') =x'$ for all $x' \in X_B' .$ The no-overlap condition guarantees  that the maps are well defined. 

Next, we consider the cartesian product of the two iterated function systems and define the maps
\begin{equation}
\Upsilon_{(b,b')}(x,x')=\left( R^{-1}(x+b) , {R'}^{-1}(x'+b') \right), 
\end{equation}
for $(x,x') \in R \times R'$ and $(b,b') \in B \times B'.$ Note that the measure $\mu \times \mu'$ is the invariant measure of the iterated function system $\left( \Upsilon_{(b,b')} \right)_{(b,b') \in B \times B'}.$ Define the functions
\begin{equation}
m_{(b,b')}(x,x') = e^{2 \pi i l(b,b') \cdot x} H_{(b,b')}(x,x'),
\end{equation}
for $(x,x') \in R \times R', (b,b') \in B \times B',$ where 
$$H_{(b,b')}(x,x')=\sum_{(c,c') \in B \times B'} a_{(b,b'), (c,c')} \chi_{\Upsilon_{(c,c')}(X_B \times X_{B'})} (x,x') .$$
($\chi_{A}$ denotes the characteristic function of the set $A.$)
With these filters we define the operators $S_{(b,b')}$ on $L^2 (\mu \times \mu')$ by 
\begin{equation}
\left( S_{(b,b')} f \right)(x,x') = m_{(b,b')}(x,x') f(\mathcal Rx, \mathcal R'x') .
\end{equation}

\bigskip
\begin{lemma} \textnormal{(\cite{DR16}, Lemma 2.2.)} 
The operators $S_{(b,b')},~(b,b') \in B \times B'$ are a representation of the Cuntz algebra $\mathcal{O}_{NN'}.$ The adjoint $S^{*}_{(b,b')}$ is given by the formula
\begin{equation}
\left( S_{(b,b')}^{*} f \right)(x,x')=\frac{1}{NN'}\sum_{(c,c') \in B \times B'}\cj m_{(b,b')} \left( \Upsilon_{(c,c')}(x,x') \right) f(\Upsilon_{(c,c')} (x,x')),
\end{equation}
for $f \in L^2 (\mu \times \mu'),~ (x,x') \in X_{B} \times X_{B'}.$
\end{lemma}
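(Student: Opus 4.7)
The plan is to first derive the adjoint formula and then verify the two Cuntz relations, using the unitarity of the matrix constructed above together with the no-overlap of $\mu\times\mu'$ (just established using \eqref{eq2.2}).

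To obtain $S^*_{(b,b')}$, I would evaluate $\langle S_{(b,b')}f,g\rangle$ from the definition and apply the self-similarity identity
$$\int h\,d(\mu\times\mu')=\frac{1}{NN'}\sum_{(c,c')\in B\times B'}\int h\circ\Upsilon_{(c,c')}\,d(\mu\times\mu'),$$
obtained from \eqref{self-affine} applied to the product IFS. Because $\mathcal R\circ\tau_c=\mathrm{id}_{X_B}$ and $\mathcal R'\circ\tau_{c'}=\mathrm{id}_{X_{B'}}$, the dynamical factor trivializes and the stated formula drops out.

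Both Cuntz relations rest on the pointwise simplification
$$m_{(b,b')}(\Upsilon_{(c,c')}(x,x'))=a_{(b,b'),(c,c')}\,e^{2\pi i R^{-1}l(b,b')(x+c)}\quad(\mu\times\mu'\text{-a.e.}),$$
which holds because no-overlap of $\mu\times\mu'$ kills every indicator in $H_{(b,b')}\circ\Upsilon_{(c,c')}$ except the one at $(c,c')$. For $S^*_{(b,b')}S_{(c,c')}$, I would combine this with $\mathcal R\circ\tau_d=\mathrm{id}$ to reduce the output to $f(x,x')$ times a sum over $(d,d')$ which, after pairing each exponential $e^{2\pi iR^{-1}l(b,b')d}$ with the $a_{(b,b'),(d,d')}$ it belongs to, becomes precisely the row-orthogonality relation of the full unitary matrix and yields $\delta_{(b,b'),(c,c')}$.

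For $\sum_{(b,b')}S_{(b,b')}S^*_{(b,b')}=I$, I would fix $(x,x')$ in the a.e.\ unique cell $\tau_{b_0}(X_B)\times\tau_{b'_0}(X_{B'})$, so that the first coordinate of $\Upsilon_{(c,c')}(\mathcal Rx,\mathcal R'x')$ reads $x+R^{-1}(c-b_0)$. The diagonal term $(c,c')=(b_0,b'_0)$ returns $f(x,x')\cdot\tfrac{1}{NN'}\sum_{(b,b')}|a_{(b,b'),(b_0,b'_0)}|^2=f(x,x')$ by column orthonormality, while each off-diagonal term, after absorbing the coordinate shift into its exponential, collapses by column orthogonality of columns $(b_0,b'_0)$ and $(c,c')$. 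The main delicate point throughout is keeping each exponential $e^{2\pi iR^{-1}l(b,b')d}$ attached to its $a_{(b,b'),(d,d')}$ before invoking orthogonality, so that the sums match the unitary matrix rather than the bare $a$-matrix.
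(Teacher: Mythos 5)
Your argument is correct and is essentially the proof given in \cite{DR16} (Lemma 2.2), which this paper cites rather than reproves: the adjoint formula via the invariance identity for $\mu\times\mu'$, the a.e.\ collapse of $H_{(b,b')}\circ\Upsilon_{(c,c')}$ to the single coefficient $a_{(b,b'),(c,c')}$ by no-overlap, and the reduction of the two Cuntz relations to row- and column-orthogonality of the completed unitary matrix. Your closing caution about keeping each exponential $e^{2\pi i R^{-1}l(b,b')\cdot d}$ attached to its coefficient $a_{(b,b'),(d,d')}$ before invoking orthogonality is precisely the point that makes the bookkeeping come out right.
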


For a word $\omega = (b_1 , b_{1}^{\prime}) (b_2 , b_{2}^{\prime}) \cdots (b_k , b_{k}^{\prime})$ and a point $c$ in some min-set, we compute $\left( S_{\omega}e_{c} \right)(x,x')$ (here $e_{c}(x,x') = e^{2 \pi i c \cdot x} $ and $S_\omega=S_{(b_1,b_1')}\dots S_{(b_k,b_k')}$). 
\begin{equation*}
\begin{split}
&\left( S_{\omega} e_{c} \right) (x,x')=S_{(b_1,b_{1}')} \cdots S_{(b_{k-1} , b_{k-1}')} e^{2 \pi i l (b_k , b_{k}') \cdot x} H_{(b_{k} , b_{k}')}(x,x') e^{2 \pi i c \cdot\mathcal R x} \\ 
&=S_{(b_1,b_{1}')} \cdots S_{(b_{k-2} , b_{k-2}')} e^{2 \pi i l (b_{k-1} , b_{k-1}') \cdot x} e^{2 \pi i l (b_k , b_{k}') \cdot\mathcal R x} e^{2 \pi i c \cdot\mathcal R^2 x} \cdot  H_{(b_{k-1} , b_{k-1}')}(x,x')   H_{(b_{k} , b_{k}')}(\mathcal Rx,\mathcal R'x') \\ 
&=\dots~~~~ \dots~~~~~ \dots \\ 
&=e^{2 \pi i \left( l (b_1 , b_{1}') \cdot x + l (b_2 , b_{2}') \cdot \mathcal Rx + \cdots + l (b_{k} , b_{k}') \cdot \mathcal R^{k-1} x + c \cdot \mathcal R^k x \right) } \cdot H_{(b_{1} , b_{1}')}(x,x')   H_{(b_{2} , b_{2}')}(\mathcal Rx,\mathcal R'x') \cdots \\
& ~~~~~~~~~~~~~~~~~~~~~~~~~~~~~~~~~~~~~~~~~ \cdots  H_{(b_{k-1} , b_{k-1}')}(\mathcal R^{k-1}x,\mathcal R'^{k-1}x').
\end{split}
\end{equation*}

Since $l(b,b') \cdot R^k b^{''} \in \mathbb{Z}$ for all $(b,b') \in B \times B',~b'' \in B, k \geq 0$ and since, by Proposition \ref{pr3}, $c \cdot R^k b$ for all $b \in B, k \geq 0,$ we get that the first term in the product above is 
$$e^{2 \pi i\left( l (b_1 , b_{1}') + Rl (b_2 , b_{2}')  + \cdots +R^{k-1} l (b_{k} , b_{k}') +R^k c  \right) \cdot x} .$$

Next we compute the projection $P_V S_{\omega} e_{c}$ onto the subspace
$$V=\{ f(x,y) = g(x) : g \in L^2 (\mu) \}.$$

As in \cite[p.8]{DR16}, we obtain
\begin{equation}\label{eq2.9}
P_V S_{\omega} e_{c} = e^{2 \pi i \left( l (b_1 , b_{1}') + Rl (b_2 , b_{2}')  + \cdots +R^{k-1} l (b_{k} , b_{k}') +R^k c  \right) \cdot x} \prod_{j=1}^{k} \alpha_{(b_j , b_j')}. 
\end{equation}
Also, as in equations $(2.12)$ and  $(2.14)$ in \cite{DR16},  we have for $(b,b') \in B \times B', t \in \mathbb{R},$
\begin{equation}
S_{(b,b')}^{*} e_{t} = \overline{\alpha}_{(b,b')} m_{B}\left( g_{(b,b')} (t)  \right) e_{g_{(b,b')} (t)},
\end{equation}
$$g_{(b,b')}(t)=R^{-1} \left( x - l(b,b') \right),$$
and
\begin{equation}\label{eq2.12}
\sum_{(b,b') \in B \times B'} |\alpha_{(b,b')}|^2 |m_B\left( g_{(b,b')}(t) \right)|^2 = 1~~~(t \in \mathbb{R}).
\end{equation}

Note that $\alpha_{(b,b')}=0$ if $(b,b')$ is not in $\iota(L).$ So, for such $(b,b')$ the transitions $x \rightarrow g_{(b,b')}(x)$ are not possible. Therefore we can work with the set $B \times B'$ when we talk about transitions and min-sets.

For a min-set $\mathcal{M},$ we can identify $\Omega(c(\mathcal{M}))$ with the set of words $\beta_1 \dots \beta_k$ with digits in $B \times B'$ that do not end in a cycle word for $c(\mathcal{M}).$

We claim that
\begin{equation}\label{eqn_1}
\mathcal{E}=\{ S_\omega e_{c(\mathcal{M})} : \omega \in \Omega(c(\mathcal M)),~~\mathcal{M}~\textnormal{is a min-set} \}
\end{equation}
is an orthonormal family in $L^2 (\mu \times \mu').$

Take two min-sets $\mathcal{M}, \mathcal{M}'$ and two words $\omega \in \Omega(c(\mathcal{M}))$, $\omega' \in \Omega(c(\mathcal{M}'))$, $\omega \neq \omega'$.  If $\omega=\omega_1 \dots \omega_n , \omega'=\omega_{1}' \dots \omega_{m}'$ and there exists $1 \leq i \leq n, m$ such that $\omega_i \neq \omega_{i}'$, then take the first such $i$. Since $S_{\omega_i}, S_{\omega_{i}'}$ have orthogonal  ranges and the $S_{\omega_j}$ are isometries, it follows that $S_{\omega}e_{c(\mathcal{M})} \perp S_{\omega'}e_{c(\mathcal{M}')}.$ The remaining case is when $\omega$ is a prefix of $\omega'$ or vice versa. Assume $\omega$ is a prefix of $\omega',$
$$\omega'=\omega  \beta , ~~~\beta=\beta_1 \dots \beta_n.$$
Then 
$$
\ip{S_{\omega}e_{c(\mathcal{M})}}{S_{\omega \beta} e_{c(\mathcal{M}')}} =\ip{ e_{c(\mathcal{M})}}{  S_{\beta} e_{c(\mathcal{M}')}}
=\ip{S_{\beta_{n}^{*}} \dots S_{\beta_{1}^{*}}e_{c(\mathcal{M})}}{e_{c(\mathcal{M}')}} $$$$
=\overline{\alpha}_{\beta_1} m_{B}(g_{\beta_1}(c(\mathcal{M}))) \ip{S_{\beta_{n}^{*}} \dots S_{\beta_{2}^{*}}e_{g_{\beta_1}(c(\mathcal{M}))}}{e_{c(\mathcal{M}')}}
=\dots $$$$
=\overline{\alpha}_{\beta_1} m_{B}(g_{\beta_1}(c(\mathcal{M}))) \overline{\alpha}_{\beta_2} m_{B}(g_{\beta_2}g_{\beta_1}(c(\mathcal{M}))) \dots \overline{\alpha}_{\beta_n} m_{B}(g_{\beta_n} \dots g_{\beta_1}(c(\mathcal{M}))) \cdot \ip{ e_{g_{\beta_n} \dots g_{\beta_1}}(c(\mathcal{M})) }{ e_{c(\mathcal{M}')}}.
$$
Thus, if the original inner product is non-zero, then the transitions 
$$c(\mathcal{M}) \rightarrow g_{\beta_1}(c(\mathcal{M})) \rightarrow g_{\beta_2}g_{\beta_1}(c(\mathcal{M})) \rightarrow g_{\beta_n} \dots g_{\beta_1}(c(\mathcal{M}))$$
are possible and 
$$\langle e_{g_{\beta_n} \dots g_{\beta_1}(c(\mathcal{M})))} , e_{c(\mathcal{M})')} \rangle \neq 0.$$

But if the transitions are possible, then 
$$g_{\beta_n} \dots g_{\beta_1}(c(\mathcal{M})) =: c''$$
is another point in $\mathcal{M}.$
We will show that if $c \neq c'$ are two cycle points (could be from the same min-set), then $\langle e_c , e_{c'} \rangle = 0.$
If we have this, then if 
$$\langle e_{g_{\beta_n} \dots g_{\beta_1}(c(\mathcal{M}))} , e_{c(\mathcal{\mathcal{M}}')} \rangle \neq 0,$$
then 
$$\mathcal M \ni g_{\beta_n} \dots g_{\beta_1}(c(\mathcal{M})) =c(\mathcal{M}') \in \mathcal{M}'.$$
So $\mathcal{M} = \mathcal{M}'$, and $c(\mathcal M)=c(\mathcal M')$, and this means that $\beta_1 \dots \beta_n$ is, or ends in, a  cycle word for $c(\mathcal{M})$ which contradicts the fact that $\omega' = \omega \beta \in \Omega(c(\mathcal{M}'))=\Omega(c(\mathcal{M})).$ So it remains to show that $\langle e_{c} , e_{c'} \rangle = 0$ for two distinct extreme cycle points $c \neq c'.$

Assume $\ip{ e_{c}}{  e_{c'}}  \neq 0.$ Then (see e.g., \cite[Equation (2.5)]{DJ07b})
$$0 \neq \ip{ e_{c}}{  e_{c'}}= \hat{\mu}(c-c') = \prod_{k=1}^{\infty} m_B\left( \frac{c-c'}{R^k} \right).$$
So $m_B\left( \frac{c-c'}{R^k} \right) \neq 0$ for all $k \geq 1.$

By repeating a cycle word for $c'$ as many times as needed we can find a word $\beta_1 \dots \beta_n$ with $n$ as large as we want, such that $g_{\beta_n} \dots g_{\beta_1} (c') = c'.$ Then $c'=l(\beta_1)+R l(\beta_2) + \dots +R^{n-1} l(\beta_n)+R^n c'.$ So
$$\frac{c-c'}{R^k}=g_{\beta_k} \dots g_{\beta_1}(c) + l(\beta_{k+1}) + R l(\beta_{k+2}) + \dots + R^{n-k-1} l(\beta_{n})+R^{n-k} c'.$$
Since $l(\beta_i) \in \mathbb{Z}$ and $b \cdot c' \in \mathbb{Z}$ for all $b \in B ,$ it follows that 
$$ 0 \neq m_B\left( \frac{c-c'}{R^k} \right) = m_B(g_{\beta_k} \dots g_{\beta_1}(c)),~~~\textnormal{for all}~~~ k \leq n.$$
So we can conclude that the transitions 
$$c \rightarrow g_{\beta_1}(c) \rightarrow g_{\beta_2}g_{\beta_1}(c) \rightarrow \dots \rightarrow g_{\beta_n} \dots g_{\beta_1}(c)$$
are possible. (The numbers $\alpha_{\beta_k}$ are non-zero because $\beta_1 \dots \beta_n$ is a cycle word for $c',$ so the transitions with this same digits are possible for $c'.$) 

If we take $\beta_1 \dots \beta_{k_n} = \underbrace{\omega' \omega' \dots \omega'}_\text{$n$ terms},$ where $\omega'$ is a cycle word for $c',$ then $\lim_{n \rightarrow \infty} g_{\beta_{k_n}} \dots g_{\beta_1} (c)=c'.$
But the points $g_{\beta_{k_n}} \dots g_{\beta_1}(c)$ all lie in the same min-set, which is finite, so we must have $g_{\beta_{k_n}} \dots g_{\beta_1} (c)=c',$ for some $n.$ Since $\beta_1 \dots \beta_{k_n}=\omega' \omega' \dots \omega',$ we get that $c=c',$ a contradiction. Thus $\ip{e_c}{e_c'}=0$, and we can conclude that the family \eqref{eqn_1} is orthonormal. Then, if we show that this orthonormal family projects onto a complete family in $V$, using a well known result in frame theory (see \cite{AA95} or \cite[Lemma 1.2]{DR16}), we obtain that its projection onto $V$ is Parseval frame. 

Thus, in what follows, we show that the projection of this orthonormal family onto the subspace $V$ is complete in $V$. We begin again by using the ideas from \cite{DR16}, but significant extra work is needed. 

Define the function $h:\mathbb{R} \rightarrow \mathbb{R}$ by 
\begin{equation}\label{eqn_2}
h(t)=\sum_{v \in \mathcal{E}} |\langle e_t , v  \rangle|^2 = \| P_{\mathcal{K}} e_t \|^2 , 
\end{equation}
where $P_{\mathcal{K}}$ is the projection onto the span $\mathcal{K}$ of $\mathcal{E}.$ Note that $h(c(\mathcal{M}))=1$ for all min-sets $\mathcal{M},$ since $e_{c(\mathcal{M})} \in \mathcal{E}.$ 

Note also that $0 \leq h\leq 1$ and we claim that $h$ can be extended to an entire function on $\mathbb{C}$.

 Let $c :=c(\mathcal{M}).$ For a fixed $\omega \in \Omega(c(\mathcal M)),$ define $f_{\omega} : \mathbb{C} \rightarrow \mathbb{C}$ by 
\begin{equation*}
\begin{split}
f_{\omega}(z) &=\int e^{2 \pi i z x} \overline{ \left( S_{\omega} e_{c} \right) } (x,x')~d\mu(x) d\mu'(x') \\
&=\langle e_{t} , S_{\omega} e_{c}  \rangle.
\end{split}
\end{equation*}
Since the measure is compactly supported, a standard convergence argument shows that the function $f_{\omega}$ is entire. Similarly $f_{\omega}^{*}(t) :=\overline{f_{\omega}(\overline{z})}$ is entire and for real $t,$
\begin{equation*}
\begin{split}
f_{\omega}(t) f_{\omega}^{*}(t)&=\langle e_{t} , S_{\omega} e_{c}  \rangle \cdot \langle \overline{e_{t} , S_{\omega} e_{c}} \rangle \\
&=|\langle e_{t} , S_{\omega} e_{c}  \rangle|^2 .
\end{split}
\end{equation*}
Thus 
$$h(t)=\sum_{\mathcal{M}} \sum_{\omega \in \Omega(c(\mathcal M))} f_{\omega}(t) f_{\omega}^{*}(t),\quad(t\in\br).$$
For $n \in \mathbb{N},$ let 
$$h_n(t)=\sum_{\mathcal{M}} \sum_{\omega\in\Omega(c(\mathcal M)),|\omega| \leq n} f_{\omega}(t) f_{\omega}^{*}(t),\quad(t\in\bc)$$ 
which is entire. By H\"{o}lder's inequality, for $t\in\bc$,
\begin{equation*}
\begin{split}
\sum_{\mathcal{M}, \omega} |f_{\omega}(t) f_{\omega}^{*}(t)| &\leq \left(\sum_{\mathcal{M}, \omega} |\langle e_{t} , S_{\omega} e_{c}  \rangle|^2 \right)^{1/2} \cdot \left(\sum_{\mathcal{M}, \omega} |\langle e_{\overline{t}} , S_{\omega} e_{c}  \rangle|^2 \right)^{1/2} \\ 
&\leq \| e_t \| \| e_{\overline{t}} \| \leq e^{K |t|},
\end{split}
\end{equation*}
for some constant $K.$ Thus the sequence $h_n (t)$ converges pointwise to a function $h(t)$ and is uniformly bounded on bounded sets. By the theorems of Montel and Vitali, the limit function is entire and it extends $h$ from $\br$ to $\bc$. 

Next we show that
\begin{equation}\label{eqn_3}
\sum_{\omega \in \Omega(c(\mathcal M))} |\langle S_{\omega} e_{c(\mathcal{M})}~,~e_{t}  \rangle|^2 = \sum_{\beta_1 \in B \times B'} \sum_{\omega \in \Omega(c(\mathcal M))} |\langle S_{\beta_1} S_{\omega} e_{c(\mathcal{M})}~,~e_{t}  \rangle|^2 .
\end{equation}
If $\omega \in \Omega(c(\mathcal M))$ and $\omega \neq \ty$ (the empty word), then $\omega = \beta_1 \omega'$ and $\omega' \in \Omega(c(\mathcal M)).$  Also, if $\beta_1 \in B \times B'$ and $\omega' \in \Omega(c(\mathcal M))$, then either $\beta_1 \omega' \in \Omega(c(\mathcal M))$ or $\beta_1 \omega' \notin \Omega(c(\mathcal M)),$ which means that $\beta_1$ completes a cycle word for $c(\mathcal{M}).$ Thus 
$$\{ \omega \in \Omega(c(\mathcal M)) : \omega \neq \ty \} = \{ \beta_1 \omega' : \beta_1\in B\times B',\omega' \in \Omega(c(\mathcal M)), \beta_1 \omega' \in \Omega(c(\mathcal M))   \}.$$ 
Therefore \eqref{eqn_3} reduces to:
\begin{equation}\label{eqn_4}
|\langle e_{c(\mathcal{M})} , e_t   \rangle|^2 = \sum\left\{ |\langle S_{\beta_1} S_{\omega'}e_{c} , e_{t}     \rangle|^2 , \omega' \in \Omega(c(\mathcal M)), \beta_1 \in B \times B' , \beta_1 \omega' \notin \Omega(c(\mathcal M))\right\}.
\end{equation}
Let $\beta_1 \omega' = \beta_1 \beta_2 \dots \beta_n.$ Note that, by \eqref{eq2.9},
\begin{equation*}
\begin{split}
\langle S_{\beta_1 \beta_2 \dots \beta_n} e_{c(\mathcal{M})} , e_t \rangle &= \langle P_{V}S_{\beta_1 \beta_2 \dots \beta_n} e_{c(\mathcal M)} , e_t \rangle \\ 
&= \langle e_{l(\beta_1) + \dots + R^{n-1} l(\beta_n) + R^n c(\mathcal M)} , e_t \rangle \prod_{k=1}^n \alpha_{\beta_{k}}.
\end{split}
\end{equation*}
But $\beta_1 \beta_2 \dots \beta_n$ is a cycle word for $c(\mathcal{M}).$ So $g_{\beta_n \dots \beta_1}(c(\mathcal{M}))=c(\mathcal{M})$ and $\beta_1 + \dots + R^{k-1} \beta_k + R^k c(\mathcal M) = c(\mathcal M).$ Thus, we have that 
$$\sum_{\substack{\omega' \in \Omega(c(\mathcal M)), \beta_1 \in B \times B' \\ \beta_1 \omega' \notin \Omega(c(\mathcal M))}} | \langle S_{\beta_1 \omega'}~e_{c(\mathcal{M})}~,~e_t \rangle|^2 =|\langle  e_{c(\mathcal{M})} , e_t  \rangle|^2 \cdot \sum_{\substack{\beta_1 \dots \beta_n \\ \textnormal{cycle word for}~c(\mathcal{M})}} \prod_{k=1}^{n} |\alpha_{\beta_k}|^2.$$
Therefore we need to show
\begin{equation}\label{eqn_5}
\sum_{\substack{\beta_1 \dots \beta_n \\ \textnormal{cycle word for}~c(\mathcal{M})}} \prod_{k=1}^{n} |\alpha_{\beta_k}|^2 = 1.
\end{equation}
Since, for all $c \in \mathcal{M},$ by \eqref{eq2.12} and Proposition \ref{pr3},
$$\sum_{c \rightarrow g_{\beta}(c)~\textnormal{is posible}}|\alpha_{\beta}|^2 = 1,$$
we can define a random walk/Markov chain $X_n, n \geq 0$ on $\mathcal{M}:$
$$P(X_n = g_{\beta}(c) | X_{n-1}=c)=|\alpha_{\beta}|^2.$$
Since the transition between any two points in $\mathcal{M}$ is possible (in several steps), the Markov chain is irreducible. Since $\mathcal{M}$ is finite, all states are recurrent (see e.g., \cite[Theorem 6.4.4]{Du10}).

So $P(\textnormal{ever reenters}~c | X_0 = c) = 1.$ But 
\begin{equation*}
\begin{split}
&P(\textnormal{ever reenters}~c | X_0 = c) =\sum_{n} P(X_n=c, X_{n-1} \neq c , \dots , X_1 \neq c | X_0 = c) \\
&=\sum_{n} \sum_{\beta_1 \dots \beta_n} P\left( X_n = g_{\beta_n}\dots g_{\beta_1}(c)=c , X_k=g_{\beta_k} \dots g_{\beta_1}(c) \neq c , 1 \leq k \leq n-1 | X_0 = c \right) \\ 
&=\sum_{\substack{\beta_1 \dots \beta_n \\ \textnormal{cycle word for}~c(\mathcal{M})}} \prod_{k=1}^{n} |\alpha_{\beta_k}|^2 .
\end{split}
\end{equation*}
Thus \eqref{eqn_5} follows and from it we get \eqref{eqn_3}.
Then we have, for $t\in\br$,
\begin{equation*}
\begin{split}
h(t) &=\sum_{\mathcal{M}} \sum_{\omega \in \Omega(c(\mathcal M))} |\langle S_{\omega} e_{c(\mathcal{M})} , e_t  \rangle|^2 \\
&=\sum_{\mathcal{M}} \sum_{\beta_1 \in B \times B'} \sum_{\omega \in \Omega(c(\mathcal M))} |\langle S_{\beta_1 \omega} e_{c(\mathcal{M})} , e_t  \rangle|^2 \\ 
&=\sum_{\mathcal{M}} \sum_{\beta_1 \in B \times B'} \sum_{\omega \in \Omega(c(\mathcal M))} |\langle S_{\omega} e_{c(\mathcal{M})} , S_{\beta_1}^{*} e_t  \rangle|^2 \\ 
&=\sum_{\beta_1 \in B \times B'} \sum_{\mathcal{M}} \sum_{\omega \in \Omega(c(\mathcal M))} |\alpha_{\beta_1}|^2 |m_{B}(g_{\beta_1}(t))|^2 \cdot |\langle S_{\omega} e_{c(\mathcal{M})} , e_{g_{\beta_1}(t)}  \rangle|^2 \\
&=\sum_{\beta_1 \in B \times B'} |\alpha_{\beta_1}|^2 |m_{B}(g_{\beta_1}(t))|^2 h(g_{\beta_1}(t)).
\end{split}
\end{equation*}
So we have the invariance equation for $h:$
\begin{equation}\label{eqn_6}
h(t)=\sum_{\beta_1 \in B \times B'} |\alpha_{\beta_1}|^2 |m_{B}(g_{\beta_1}(t))|^2 h(g_{\beta_1}(t)).
\end{equation}
Consider the interval
$$I=\left[ \frac{\min(-l(\beta))}{R-1} , \frac{\max(-l(\beta))}{R-1}   \right]$$
and note that this is invariant for the maps $g_{\beta}.$ $h$ is continuous so it has a minimum on $I.$ Let $\tilde{h}(t)=h(x)- \min_{t \in I} h(t).$ With \eqref{eqn_6} and \eqref{eq2.12} we get that $\tilde{h}$ satisfies \eqref{eqn_6} too. Also $\tilde{h}$ has a zero in $I.$ Let $Z$ be the set of zeros of $\tilde{h}$ in $I.$ If $Z$ is infinite, then, since $\tilde h$ is entire we must get $\tilde{h} \equiv 0.$ So $h$ is constant and since $h(0)=1$ we get $h \equiv 1.$ So let's assume $Z$ is finite. Using \eqref{eqn_6} for $\tilde{h}$ we have that for $t \in Z$ and $\beta \in B \times B'$ either $|\alpha_{\beta}|^2 |m_{B}(g_{\beta}(t))|^2 =0$ or $\tilde{h}(g_{\beta}(t))=0.$ Thus $Z$ is invariant. But then it contains a min-set $\mathcal{M},$ so $c(\mathcal{M}) \in Z.$ This implies that $\tilde{h}(c(\mathcal{M}))=0,$ so $c(\mathcal{M})$ is a minimum for $h.$ But $0 \leq h \leq 1$ and $h(c(\mathcal{M}))=1$ which means that $h \equiv 1$ contradicting the assumption that $Z$ is finite. Thus, $h \equiv 1$ and this means that $\| P_{\mathcal{K}} e_t \|=1=\| e_t \|$ for all $t\in\br$. So $\mathcal{K}$ contains all functions $e_t$ which by Stone-Weierstrass theorem, span the subspace $V.$ Thus $\mathcal{K} \supset V$ and then with Lemma 1.2 in \cite{DR16} we get that $P_{V} S_{\omega}e_{c(\mathcal M)},~\omega \in \Omega(c(\mathcal M)),$ where $\mathcal{M}$ is a $\min$-set, form a Parseval frame for $V.$ Using \eqref{eq2.9} and discarding the zero vectors by going back to $L$ instead of $B \times B',$ we obtain the result.

\begin{remark}\label{rem2.1}
We will show that picking a different cycle point in a min-set $\mathcal M$ will not produce a significantly different Parseval frame: the exponential functions will be the same, and the weights might be just redistributed in a sense that we will make precise below. 

Let $c$ be a cycle point in the min-set $\mathcal M$. Define
\begin{equation}
\Lambda(c):=\left\{l_{0}+R l_{1} + \dots + R^k l_k + R^{k+1} c  : l_0\dots l_k\in\Omega(c)\right\}.
\label{eq2.1.1}
\end{equation}

We will show first that $\Lambda(c)$ is the smallest set that contains $c$ and with the property that has the invariance property: $R\Lambda+L\subset\Lambda$. Indeed, if $\Lambda$ is such a set, then, since it contains $c$, one can show by induction that it contains $l_0+Rl_1+\dots+R^kl_k+R^{k+1}c$ for any $l_0,\dots,l_k$ in $L$; therefore $\Lambda$ contains $\Lambda(c)$. Then, we have to prove only that $R\Lambda(c)+L\subset \Lambda(c)$. Take $l_0\dots l_k\in\Omega(c(\mathcal M))$ and $j_0\in L$. If $j_0l_0\dots l_k$ is in $\Omega(c)$ then $j_0+R(l_0+Rl_1+\dots +R^kl_k+R^{k+1}c)$ is in $\Lambda(c)$. If $j_0l_0\dots l_k$ is not in $\Omega(c)$, this means that $j_0l_0\dots l_k$ is a cycle word for $c$. So $c=g_{l_k}\dots g_{l_0}g_{j_0}c$ which means that 
$$j_0+R(l_0+\dots +R^{k}j_k+R^{k+1}c)=c\in \Lambda(c).$$
Thus $R\Lambda(c)+L\subset \Lambda(c)$.

Now pick another point $c'$ in the min-set $\mathcal M$. We claim that $\Lambda(c)=\Lambda(c')$. To prove this, we show that $c'$ is in $\Lambda(c)$. Indeed, by Proposition \ref{pr3}, the transition from $c'$ to $c$ is possible in several steps. So there exist $l_0,\dots,l_k$ in $L$ such that $g_{l_k}\dots g_{l_0}c'=c$. This means that 
$$c'=l_0+Rl_1+\dots +R^k l_k+R^{k+1}c.$$
Since the set $\Lambda(c)$ has the invariance property, it follows that $c'\in\Lambda(c)$. But $\Lambda(c')$ is the smallest set that contains $c'$ and has the invariance property, therefore $\Lambda(c')\subset \Lambda(c)$. By symmetry, the reverse inclusion holds too and therefore, the two sets are equal. 

The equality of the two sets $\Lambda(c)$ and $\Lambda(c')$ means that the set of exponential functions that appear in the conclusion of Theorem \ref{th1.6}, is not changed if we pick a different cycle point. 

Next, we investigate what happens to the weights if we change the cycle points. Take two distinct cycle points $c\neq c'$ in $\mathcal M$. Let $l_0\dots l_k\in\Omega(c)$ so that 
$$\lambda=l_0+Rl_1+\dots+R^kl_k+R^{k+1}c\in\Lambda(c).$$
Since the transition from $c'$ to $c$ is possible, we can write $g_{j_r}\dots g_{j_0}c=c'$ and we can pick $j_0,\dots, j_r$ so that $j_0\dots j_r$ does not end in a cycle word for $c'$, which means $j_0\dots j_r$ is in $\Omega(c')$. Also $j_0\dots j_r$ is not the empty word since $c\neq c'$. Then $l_0\dots l_kj_0\dots j_r\in\Omega(c')$ and 
$$\lambda=l_0+Rl_1+\dots+R^kl_k+R^{k+1}j_0+R^{k+2}j_1+\dots+R^{k+1+r}j_r+R^{k+1+r+1}c'.$$

The weight of $\lambda$ as an element of $\Lambda(c)$ is $\alpha_{l_0}\dots\alpha_{l_k}$. The weight of $\lambda$ as an element of $\Lambda(c')$ is $\alpha_{l_0}\dots\alpha_{l_k}\alpha_{j_0}\dots\alpha_{j_r}$. 

We will show that 
\begin{equation}
\sum_{\substack{j_0\dots j_r\in\Omega(c')\\g_{j_r}\dots g_{j_0}(c)=c'}}|\alpha_{j_0}\dots\alpha_{j_r}|^2=1.
\label{eq2.1.2}
\end{equation}

For this we used the Markov chain $(X_n)_{n\in\bn}$ on $\mathcal M$ as in the Proof of Theorem \ref{th1.6}. Since this a recurrent Markov chain we have $P(X\mbox{ enters }c' | X_0=c)=1$. Then

\begin{equation*}
\begin{split}
&1=P(X \textnormal{enters}~c' | X_0 = c) =\sum_{r} P(X_{r+1}=c, X_{r} \neq c' , \dots , X_1 \neq c' | X_0 = c) \\
&=\sum_{r} \sum_{j_0 \dots j_r} P\left( X_{r+1} = g_{j_r}\dots g_{j_0}(c)=c' , X_k=g_{j_k} \dots g_{j_0}(c) \neq c' , 0 \leq k \leq r-1 | X_0 = c \right) \\ 
&=\sum_{\substack{j_0\dots j_r\in\Omega(c')\\g_{j_r}\dots g_{j_0}(c)=c'}}|\alpha_{j_0}\dots\alpha_{j_r}|^2 ,
\end{split}
\end{equation*}
and this proves \eqref{eq2.1.2}. 
\end{remark}

Thus the element $\lambda$ of $\Lambda(c)$ appears in $\Lambda(c')$ multiple times but the combined absolute value squared of the weights is the same, so the Parseval frames are essentially the same, if we change the choices of the cycle points in each min-set.

\section{Examples}

\begin{example}\label{ex1}
Let $R=4$, $B=\{0,2\}$. This is the Jorgensen-Pedersen example of a Cantor set from \cite{JP98}. We pick $L=\{0,3,15\}$ and $\alpha_{3},\alpha_{15} \in\bc\setminus\{0\}$ with $|\alpha_3|^2+|\alpha_{15}|^2=1$, $\alpha_0=1$. The assumptions 1.1 are satisfied. 

We compute now the min-sets. By Proposition \ref{pr3}, the points in a min-set are contained in $\frac12\bz$ and in the interval $[-5,0]$. We also have $m_B(x)=\frac{1+e^{2\pi i 2x}}{2}$. If $x_0$ is a point in a min-set, and $x_0=(2k+1)/2$ for some $k\in\bz$, then $m_B(x_0/4)=(1\pm i)/2\neq 0$, so the transition is possible. So $x_0/4=(2k+1)/8$ is in the min-set, but it is not in $\frac12\bz$, a contradiction. Thus the points in a min-set must be integers. Then, a simple computation shows that the min-sets are $\{0\}$ and $\{-4,-1\}$. Inside the min-set $\{-4,-1\}$ we have the possible transitions $-1\stackrel{3}{\rightarrow}-1$, $-1\stackrel{15}{\rightarrow}-4$, $-4\stackrel{0}{\rightarrow}-1$.

We have the extreme cycles: $\{0\}$ and $\{-4,-1\}$. For the extreme cycle point $0$, the cycle word is $0$. For the extreme cycle point $-1$ the cycle words are $3$ and $15.0$, and for the extreme cycle point $-4$ we have infinitely many cycle words: $0.15$, $0.3.15$, $0.3.3.15$, $0,3.3.3.15$ and so on. 

Then we can pick $0$ and $-1$ for the extreme cycle points in their respective min-sets and obtain a Parseval frame of exponential functions as in Theorem \ref{th1.6}. 

A few observations: there are elements in this Parseval frame which are not orthogonal. For example, with the notation in \eqref{eq2.1.1} take $3\in\Lambda(0)$ and $75=15+4\cdot 15\in\Lambda(0)$. Then $e_3$ and $e_{75}$ are not orthogonal:
$$\ip{e_{75}}{e_3}_{L^2(\mu_B)}=\widehat\mu_B(75-3)=\prod_{n=1}^\infty m_B\left(\frac{72}{4^n}\right)=m_B(8)m_B(2)m_B(1/2)\prod_{n=1}^\infty m_B\left(\frac{1}{2\cdot 4^n}\right)\neq 0.$$

Also $11=15+4\cdot (-1)\in\Lambda(-1)$ and $e_3$ and $e_{11}$ are not orthogonal:
$$\ip{e_{11}}{e_3}_{L^2(\mu_B)}=m_B(8/4)m_B(2/4)\prod_{n=1}^\infty m_B\left(\frac{1}{2\cdot 4^n}\right)\neq 0.$$

Note also that some of the functions might appear multiple times in the Parseval frame: for example $15$ can be written both as $15\in\Lambda(0)$ with weight $\alpha_{15}$ and as $3+4\cdot 3\in\Lambda(0)$.

\end{example}

\begin{proposition}\label{pr4.2}
Let $R=4$, $B=\{0,2\}$. Suppose $L$ and $(\alpha_l)_{l\in L}$ satisfy the assumptions 1.1. If there exist $a,b\in L$ with $a\equiv 1(\mod 4)$ and $b\equiv 3(\mod 4)$ then there are no non-trivial min-sets. 
\end{proposition}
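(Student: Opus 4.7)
The plan is an argument by contradiction. Assume $\mathcal M$ is a non-trivial min-set; the goal is to derive $\mathcal M=\{0\}$. Throughout, one may assume $\alpha_l\neq 0$ for every $l\in L$, since any $l$ with $\alpha_l=0$ can be discarded from $L$ without affecting Assumptions 1.1 or the hypothesis. The first step is to confine $\mathcal M$ to $\bz$: by Proposition \ref{pr3}, every $t\in\mathcal M$ is an extreme cycle point, so $t\cdot b\in\bz$ for all $b\in B=\{0,2\}$, giving $t\in\tfrac12\bz$. The argument from Example \ref{ex1} then rules out half-integers: if $t\in\tfrac12+\bz$, then $m_B(t/4)=(1\pm i)/2\neq 0$, and since $\alpha_0=1$ the transition $t\stackrel{0}{\rightarrow}t/4$ is possible; but $t/4\notin\tfrac12\bz$, contradicting invariance of $\mathcal M\subset\tfrac12\bz$. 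Thus $\mathcal M\subset\bz$.

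The hypothesis on $a$ and $b$ enters in the second step, which shows $\mathcal M$ contains no odd integers. For $t,l\in\bz$ a direct computation gives $m_B(g_l(t))=(1+(-1)^{t-l})/2$, which equals $1$ if $t\equiv l\pmod 2$ and $0$ otherwise; hence every possible transition from an odd $t\in\mathcal M$ must use an odd digit. By Proposition \ref{pr3}, for any digit $l_0$ of a possible transition, $[l_0]\cap L=\{l\in L:l\equiv l_0\pmod 4\}$, and Definition \ref{defcong} with $R=4$, $B=\{0,2\}$ yields $[l_0]=\{k\in\bz:k\equiv l_0\pmod 2\}$. For odd $l_0$ this forces every odd element of $L$ to lie in a single residue class modulo $4$, contradicting the existence of $a\equiv 1$ and $b\equiv 3\pmod 4$ in $L$. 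Hence no possible transition from an odd $t\in\mathcal M$ exists, which contradicts $\sum_{l\in L}|\alpha_l|^2|m_B(g_l(t))|^2=1$. Therefore $\mathcal M\subset 2\bz$.

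The third step iterates the digit-$0$ transition. For any $t\in\mathcal M\subset 2\bz$ the transition $t\stackrel{0}{\rightarrow}t/4$ is possible, because $m_B(t/4)=1$ for even $t$, so $t/4\in\mathcal M\subset 2\bz$. Inductively, $t/4^n\in 2\bz$ for every $n\geq 0$; choosing $n$ with $4^n>|t|/2$ forces $t/4^n=0$, so $t=0$ and $\mathcal M=\{0\}$, contradicting non-triviality. I expect the main obstacle to be the second step: one has to identify the mod-$4$ constraint that Proposition \ref{pr3} places on transition digits, recognize that $B=\{0,2\}$ makes the congruence class $[l_0]$ exactly the parity class of $l_0$, and combine this with the fact that on integer inputs the zero set of $m_B$ is governed purely by parity. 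Once the ``no odd digit, hence no odd cycle point'' conclusion is in hand, both step one (ruling out half-integers) and step three (the digit-$0$ collapse) follow essentially mechanically.
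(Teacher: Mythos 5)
Your proof is correct and follows the same three--stage skeleton as the paper's (points of a min-set are integers; odd integers are excluded using the hypothesis on $a$ and $b$; the remaining even points collapse to $0$ under repeated digit-$0$ transitions), but the middle stage is handled by a genuinely different mechanism. The paper argues case by case on $x_0\bmod 4$: for $x_0\equiv 1\pmod 4$ it applies $g_b$ (and for $x_0\equiv 3$ it applies $g_a$), checks directly that the image is a half-integer at which $m_B$ equals $1$, and so obtains a possible transition to a non-integer point of $\mathcal M$, contradicting integrality. You instead show that no transition out of an odd point can exist at all: on integers the vanishing of $m_B(g_l(t))$ is governed by the parity of $t-l$, so a transition from an odd $t$ needs an odd digit, and the congruence constraint of Proposition \ref{pr3} --- that $[l_0]\cap L$, which by Definition \ref{defcong} with $B=\{0,2\}$ is the full parity class of $l_0$ inside $L$, must equal a single residue class mod $4$ --- is incompatible with having both $a\equiv 1$ and $b\equiv 3\pmod 4$ in $L$; this contradicts $\sum_{l}|\alpha_l|^2|m_B(g_l(t))|^2=1$. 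Your route leans more on the structural content of Proposition \ref{pr3}; the paper's is more hands-on. Both are valid, and yours has the small advantage that the case $x_0\equiv 2\pmod 4$ needs no separate treatment, being absorbed into the final descent $t\in 2\cdot 4^n\bz$ for all $n$.

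One caveat: your opening reduction ``discard every $l$ with $\alpha_l=0$'' is not justified as stated, because discarding such an $l$ could remove the very witnesses $a$ or $b$ of the hypothesis (if, say, $\alpha_a=0$). Indeed, the proposition is literally false without some nonvanishing assumption: adjoining the digit $1$ with $\alpha_1=0$ to the system of Example \ref{ex1} preserves Assumptions 1.1 and the hypothesis of the proposition, yet the non-trivial min-set $\{-4,-1\}$ survives. This is not a defect specific to your argument --- the paper's proof also declares the transitions with digits $a$ and $b$ possible without checking $\alpha_a,\alpha_b\neq 0$, and Proposition \ref{pr3} is only stated under $\alpha_l\neq 0$ for all $l$, so nonvanishing weights are an implicit standing hypothesis --- but you should record $\alpha_a,\alpha_b\neq 0$ as an assumption rather than claim it can be arranged by pruning $L$.
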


\begin{proof}
As in Example \ref{ex1}, we can see that the points in a min-set must be integers. Suppose there is a non-trivial min-set $\mathcal M$ and let $x_0$ be a point in it. Let $a=4a_1+1$, $b=4a_2+3$.

If $x_0=4k+1$, then $g_{b}x_0=\frac{4(k-a_2)-2}{4}=\frac{2(k-a_2)-1}{2}$. Then $m_B(g_{b}x_0)\neq 0$, so the transition $x_0\rightarrow g_{b}x_0$ is possible and $g_{b}x_0\in\mathcal M$. But $g_{b}x_0$ is not an integer, a contradiction. 

If $x_0=4k+3$, then we obtain a contradiction in a similar manner, using $g_{a}x_0$. 

If $x_0=4k+2$, then we obtain a contradiction using $g_0x_0$. 

Thus, $x_0$ is of the form $x_0=4^ny_0$ with $y_0$ not divisible by $4$, $n\in\bn$. But then, the transitions $x_0\rightarrow x_0/4\rightarrow\dots\rightarrow y_0$ are possible, with digit 0 so $y_0\in \mathcal M$. But this contradicts the previous cases. 

In conclusion, there can be no non-trivial min-sets. 
\end{proof}

\begin{proposition}\label{pr4.3}
Let $R=4$, $B=\{0,2\}$. Suppose $L=\{0,a,b\}$ and $(\alpha_l)_{l\in L}$ satisfy the assumptions 1.1. Suppose $a=a_0+4a_1+4^2k$, $b=a_0+4b_1+4^2l$ for $a_0,a_1,b_1\in\{0,1,2,3\}$ and $k,l\in\bz$. If there is a non-trivial min-set, then $\{a_1,(a_1+a_0)\mod 4\}\cap \{b_1,(b_1+a_0)\mod 4\}\neq \ty$. 
\end{proposition}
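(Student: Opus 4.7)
The plan is to mimic the parity and residue analysis from Example \ref{ex1} and Proposition \ref{pr4.2}, now carefully tracking residues modulo $4$ of points in a non-trivial min-set $\mathcal M$.

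First I would verify that every point of $\mathcal M$ is an integer, and that at least one of them is odd. By Proposition \ref{pr3} all points of $\mathcal M$ lie in $\frac12\bz$; a genuine half-integer $x_0\in\mathcal M$ is excluded because $m_B(x_0/4)\ne 0$ (since $\alpha_0=1$), so invariance would put $x_0/4\notin\frac12\bz$ into $\mathcal M$, a contradiction. If $x_0\in\mathcal M$ is a nonzero even integer, then $g_0(x_0)=x_0/4$ is a possible transition, hence also an integer in $\mathcal M$, which forces $4\mid x_0$. Iterating this strips off successive factors of $4$ from $x_0$; since $x_0\ne 0$ and no element of the sequence can be $\equiv 2\pmod 4$ (this would violate the divisibility just proved), the iteration terminates at an odd integer of $\mathcal M$.

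Fix such an odd $x_0\in\mathcal M$ and substitute it into the invariance relation $\sum_{l\in L}|\alpha_l|^2|m_B(g_l(x_0))|^2=1$. Since $x_0$ is odd, $m_B(x_0/4)=0$, and for $l\in\{a,b\}$ the factor $m_B((x_0-l)/4)$ equals $1$ when $l$ is odd and $0$ when $l$ is even. Combined with $|\alpha_a|^2+|\alpha_b|^2=1$, obtained from \eqref{eq2.2} with $b=b'$ and $\alpha_0=1$, this forces both $a$ and $b$ to be odd, so $a_0\in\{1,3\}$. The transitions $g_a$ and $g_b$ at $x_0$ are then both possible, so $g_a(x_0),g_b(x_0)\in\frac12\bz\cap\mathcal M$, and integrality of $\mathcal M$ forces $x_0\equiv a\equiv b\equiv a_0\pmod 4$.

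Writing $x_0=a_0+4c$ with $c\in\bz$, direct substitution gives
\[
g_a(x_0)=c-a_1-4k\equiv c-a_1\pmod 4,\qquad g_b(x_0)=c-b_1-4l\equiv c-b_1\pmod 4.
\]
Each of these lies in $\mathcal M$, so by the first step is either odd (hence $\equiv a_0\pmod 4$) or even (hence $\equiv 0\pmod 4$). Therefore $c\bmod 4$ belongs to both $\{a_1,(a_1+a_0)\bmod 4\}$ and $\{b_1,(b_1+a_0)\bmod 4\}$, so this residue witnesses that the intersection is nonempty. I expect the main obstacle to be the first step---excluding both half-integer and $\equiv 2\pmod 4$ representatives simultaneously---but conceptually this is the same argument that appears in Example \ref{ex1} and Proposition \ref{pr4.2}; once it is in place, the rest is routine mod-$4$ bookkeeping.
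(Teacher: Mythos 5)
Your proposal is correct and follows essentially the same route as the paper: it establishes that points of $\mathcal M$ are integers congruent to $0$ or $a_0$ modulo $4$ (the content of the paper's Lemma \ref{lem4.4}, which you inline), locates an odd point, and then reads off the membership of $c\bmod 4$ in both sets from the fact that $g_a(x_0)$ and $g_b(x_0)$ lie in $\mathcal M$. The only cosmetic differences are that you produce the odd point by stripping factors of $4$ from a nonzero element rather than by the paper's ``all points even forces $\mathcal M=\{0\}$'' argument, and that you derive the oddness of $a$ and $b$ from the invariance identity (where, as in the paper's own lemma, one tacitly uses $\alpha_a,\alpha_b\neq 0$ as in Proposition \ref{pr3}, or simply the hypothesis $a\equiv b\pmod 4$).
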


\begin{proof}
We begin with a lemma. 

\begin{lemma}\label{lem4.4}
If $x_0$ is a point in a non-trivial min-set $\mathcal M$, then $x_0\in\bz$ and $x_0\equiv a_0(\mod4)$ or $x_0\equiv 0(\mod 4)$. If $x_0\equiv 0(\mod 4)$, then the only possible transition is $x_0\stackrel{0}{\rightarrow}x_0/4$. If $x_0\equiv a_0(\mod 4)$, then the only possible transitions are $x_0\stackrel{a}{\rightarrow}(x_0-a)/4$, and $x_0\stackrel{b}{\rightarrow}(x_0-b)/4$. 
\end{lemma}

\begin{proof}
We saw before that $x_0$ has to be an integer. If $x_0=4n+2$, with $n\in\bz$, then $m_B(x_0/4)=m_B((2n+1)/2)=1$ so the transition $x_0\rightarrow x_0/4$ is possible, so $x_0$ is also in $\mathcal M$, but it is not an integer, a contradiction. If $x_0=4k+d$ with $k\in\bz$, $d\in\{1,3\}\setminus \{a_0\}$, then $m_B((x_0-a)/4)=m_B((\pm 2+4(n-k))/4)=m_B((\pm 1+2(n-k))/2)=1$. So the transition $x_0\rightarrow (x_0-a)/4$ is possible. Hence $(x_0-a)/4$ is in $\mathcal M$, but it is not an integer, a contradiction.

If $x_0\equiv 0(\mod 4)$, then $x_0/4$ is an integer so $m_B(x_0/4)=1$ so the transition $x_0\stackrel{0}{\rightarrow}x_0/4$ is possible. Also $m_B((x_0-a)/4)=m_B(x_0-b)/4)=0$. So there are no other possible transitions. 

If $x_0\equiv a_0(\mod 4)$, then $(x_0-a)/4$ and $(x_0-b)/4$ are both integers and therefore the transitions $x_0\stackrel{a}{\rightarrow}(x_0-a)/4$, and $x_0\stackrel{b}{\rightarrow}(x_0-b)/4$ are both possible. Also $m_B(x_0/4)=0$. So the transition $x_0\stackrel{0}{\rightarrow}x_0/4$ is not possible. 
\end{proof}

Returning to the proof of the theorem, let $x_0$ be a point in a non-trivial min-set $\mathcal M$. We write $x_0=d_0+4d_1+16d$ for some $d_0,d_1\in\{0,1,2,3\}$ and $d\in\bz$. We claim that there is an odd number $x_0$ in $\mathcal M$. Indeed, if all the points in $\mathcal M$ are even, then with Lemma \ref{lem4.4}, the only possible transition is with digit $0$. But that would mean that $\mathcal M=\{0\}$. 
So, we can pick $x_0$ to be odd. By Lemma \ref{lem4.4}, we have $d_0=a_0$ and both transitions $x_0\stackrel{a}{\rightarrow}(x_0-a)/4$ and $x_0\stackrel{b}{\rightarrow}(x_0-b)/4$ are possible.
Then $x_1=(x_0-a)/4=d_1-a_1+4(d-k)$ and $y_1=(x_0-b)/4=d_1-b_1+4(d-l)$ are in $\mathcal M$. Using again Lemma \ref{lem4.4}, we have that $(x_1-a_1)\mod 4\in\{0,a_0\}$ and $(x_1-b_1)\mod 4\in\{0,a_0\}$. Therefore 
$x_1\in \{a_1,(a_1+a_0)\mod 4\}\cap\{b_1,(b_1+b_0)\mod 4\}$. 
\end{proof}

\begin{definition}\label{def4.5}
Let $R,B,L$ satisfy the assumptions 1.1. Let $L'$ be a non-empty subset of $L$. We say that $x_0$ is a \emph{pre-extreme cycle point} for the digit set $L'$ if there are possible transitions $x_0\stackrel{l_0'}{\rightarrow}x_1\stackrel{l_1'}{\rightarrow}\dots\stackrel{l_{n-1}'}{\rightarrow} x_n\stackrel{l_n'}{\rightarrow}\dots\stackrel{l_{n+p-1}'}{\rightarrow} x_{n+p}$, with $x_n=x_{n+p}$, $p>0$, $l_0'\dots, l_{n+p-1}'\in L'$ and $|m_B(x_i)|=1$, $i\in\{0,\dots,n+p\}$. 
\end{definition}

\begin{theorem}\label{th4.6}
Let $R=4$, $B=\{0,2\}$ and let $L=\{0,a,b\}$ and $(\alpha_l)_{l\in L}$ satisfy the assumptions 1.1. Assume $a\equiv b(\mod 4)$. Let $\mathcal M$ be a non-trivial min-set. 
Then every point $x_0$ in $\mathcal M$ is a pre-extreme cycle point for both the digit set $\{0,a\}$ and for the digit set $\{0,b\}$. In particular, there exist an extreme cycle point $c$ for the digit set $\{0,a\}$ and digits $l_0,\dots,l_{n-1}\in \{0,a\}$ such that 

$$x_0=4^nc+4^{n-1}l_{n-1}+\dots +4l_1+l_0.$$

If the digit set $\{0,a\}$ has only one extreme cycle, then there is at most one non-trivial min-set, and, given an odd extreme cycle point $c$ for the digit set $\{0,a\}$, there exist digits $j_0,\dots,j_{r-1}$ in $\{0,a\}$ such that 
$$b=-(4^{r+1}-1)c-(4^rj_{r-1}+\dots+4^2j_1+4j_0).$$
\end{theorem}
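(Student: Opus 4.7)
The plan is to exploit Lemma \ref{lem4.4}, which completely describes the possible transitions inside a non-trivial min-set. For the first assertion, given $x_0\in\mathcal M$ I would build a trajectory $x_0\to x_1\to x_2\to\cdots$ using only digits in $\{0,a\}$: whenever $x_i\equiv 0\pmod 4$ apply digit $0$, and whenever $x_i\equiv a_0\pmod 4$ apply digit $a$. Lemma \ref{lem4.4} guarantees these transitions are possible, so all $x_i$ remain in $\mathcal M$ by invariance, are integers, and therefore satisfy $|m_B(x_i)|=1$. Finiteness of $\mathcal M$ forces the trajectory to be eventually periodic, which is exactly the condition in Definition \ref{def4.5} for $x_0$ to be a pre-extreme cycle point for $\{0,a\}$. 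Running the same argument with digit $b$ in place of $a$ gives the statement for $\{0,b\}$. The identity $x_0=4^nc+4^{n-1}l_{n-1}+\cdots+l_0$, with $c$ the cycle point reached after $n$ steps, then follows by inverting $g_{l_i}(x_i)=(x_i-l_i)/4$ iteratively.

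For the second paragraph of the theorem, assume $\{0,a\}$ has a unique extreme cycle $C$. The first step is to show $C\subseteq\mathcal M$ for every non-trivial min-set $\mathcal M$: the deterministic $\{0,a\}$-trajectory constructed above from any $x_0\in\mathcal M$ enters some extreme $\{0,a\}$-cycle that lies inside $\mathcal M$, and by uniqueness that cycle is $C$. Consequently, any two non-trivial min-sets share every point $c\in C$, and Proposition \ref{pr3} forces each to equal the full $L$-orbit of $c$, so they coincide. This shows there is at most one non-trivial min-set.

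To derive the formula for $b$, start from the given odd extreme cycle point $c\in C\subseteq\mathcal M$. Since $c\equiv a_0\pmod 4$, Lemma \ref{lem4.4} makes $c\stackrel{b}{\rightarrow}g_b(c)$ a possible transition, so $y:=g_b(c)\in\mathcal M$. Running the deterministic $\{0,a\}$-iteration from $y$, the trajectory enters $C$ and then cycles through its points; I would extend the sequence of digits until the iteration lands back on $c$ itself, producing $j_0,\dots,j_{r-1}\in\{0,a\}$ with $g_{j_{r-1}}\cdots g_{j_0}g_b(c)=c$. Unwinding $g_l(x)=(x-l)/4$ successively gives
$$c=\frac{c-b-4j_0-16j_1-\cdots-4^r j_{r-1}}{4^{r+1}},$$
and rearranging yields $b=-(4^{r+1}-1)c-(4^r j_{r-1}+\cdots+4j_0)$.

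The main delicate point is the bookkeeping in this last step: one must ensure the $\{0,a\}$-iteration starting from $y$ can be made to return to the \emph{specific} chosen $c\in C$ rather than to some other point of $C$. This is resolved by the observation that once the deterministic iteration enters $C$ it cycles through every point of $C$, so one simply continues the iteration until it lands on $c$.
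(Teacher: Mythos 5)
Your proposal is correct and follows essentially the same route as the paper: Lemma \ref{lem4.4} gives the deterministic $\{0,a\}$-trajectory that stays in the finite set $\mathcal M$ and hence becomes periodic, the formulas follow by unwinding $g_l(x)=(x-l)/4$, and uniqueness comes from every non-trivial min-set containing the unique extreme $\{0,a\}$-cycle. Your explicit remark that one must continue the iteration from $g_b(c)$ until it returns to the \emph{chosen} point $c$ of the cycle is a small point the paper leaves implicit, and it is handled correctly.
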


\begin{proof}
If $x_0$ is a point in a non-trivial min-set $\mathcal M$, then, by Lemma \ref{lem4.4}, when $x_0$ is even, the only possible transition is with digit $0$ and, when $x_0$ is odd, both transitions, with digits $a$ and $b$ are possible. Thus we can find possible transitions $x_0\stackrel{l_0}{\rightarrow} x_1\stackrel{l_1}{\rightarrow}\dots\stackrel{l_{n-1}}{\rightarrow} x_n\rightarrow\dots$ using only the digits from $\{0,a\}$. All the points $x_j$ are in $\mathcal M$ and, since $\mathcal M$ is finite, there is $n$ and $p>0$ such that $x_{n+p}=x_n$. By Proposition \ref{pr3}, $|m_B(x_j)|=1$ for all $j$. Thus $x_0$ is a pre-extreme cycle point for the digit set $\{0,a\}$. 
If $x_n=c$, then 
$$x_0=4x_1+l_0=4(4x_2+l_1)+l_0=\dots=4^nc+4^{n-1}l_{n-1}+\dots+4l_1+l_0.$$

If the digit set $\{0,a\}$ has only one extreme cycle, this cycle has to be in $\mathcal M$, so $c\in\mathcal M$. The transition $c\rightarrow (c-b)/4=x_0$ is possible and applying the previous equality to $x_0$ we get 
$$\frac{c-b}4=4^rc+4^{r-1}j_{r-1}+\dots+4j_1+j_0,$$
for some digits $j_0,\dots,j_{r-1}$ in $\{0,a\}$, which implies the last equality in the statement of the theorem. Since every non-trivial min-set has to contain the extreme cycle for the digits $\{0,a\}$, and since min-sets are disjoint, it follows that there can be only one such set. 
\end{proof}

\begin{corollary}\label{cor4.7}
Let $R=4$, $B=\{0,2\}$ and suppose $L=\{0,3,b\}$, $(\alpha_l)_{l\in L}$ satisfy the assumptions. Assume that there is a non-trivial min-set. Then $b$ is divisible by 3, $b$ is of the form 
\begin{equation}
b=(4^{r+1}-1)-(4^rj_{r-1}+\dots+4^2j_1+4j_0),
\label{eq4.7.1}
\end{equation}

for some digits $j_0,\dots,j_{r-1}\in\{0,3\}$, 
and every point in a non-trivial min-set is of the form 
\begin{equation}
x_0=-4^n+4^{n-1}l_{n-1}+\dots +4l_1+l_0,
\label{eq4.7.2}
\end{equation}

for some digits $l_0,\dots,l_{n-1}$ in $\{0,3\}$.
\end{corollary}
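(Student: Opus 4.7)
The plan is to reduce the corollary to Theorem \ref{th4.6} applied with $a=3$, after first establishing the missing hypothesis $b\equiv 3\pmod 4$. Assumptions 1.1 with $L=\{0,3,b\}$ and the orthogonality identities \eqref{eq2.2} force $\alpha_3,\alpha_b\ne 0$ and both $a$ and $b$ odd, so it suffices to rule out $b\equiv 1\pmod 4$. I would do this by adapting the argument of Lemma \ref{lem4.4} without assuming $a\equiv b\pmod 4$: the argument of Example \ref{ex1} already gives $x_0\in\bz$ for any $x_0$ in a non-trivial min-set $\mathcal M$, and then each of the residues $x_0\equiv 2,1,3\pmod 4$ would yield a transition (with digit $0$, $3$, and $b$ respectively) whose target has $|m_B|=1$ but is not an integer, a contradiction. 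Hence every point of $\mathcal M$ is divisible by $4$; iterating $g_0$ and using finiteness of $\mathcal M$ then forces $\mathcal M=\{0\}$, contradicting non-triviality. Thus $b\equiv 3\pmod 4$.

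With $a\equiv b\pmod 4$ now in hand, Theorem \ref{th4.6} applies with $a=3$. A direct computation (Proposition \ref{pr3} confines extreme cycle points to $[-1,0]\cap\bz$) identifies the only extreme cycles of the digit set $\{0,3\}$ as $\{0\}$ (fixed by $g_0$) and $\{-1\}$ (fixed by $g_3$). The first part of Theorem \ref{th4.6} then represents every $x_0\in\mathcal M$ as $4^nc+4^{n-1}l_{n-1}+\dots+l_0$ with $l_i\in\{0,3\}$ and $c\in\{0,-1\}$, and moreover $c\in\mathcal M$ since it is the terminus of a trajectory forced to stay inside $\mathcal M$. I would exclude $c=0$ by observing that if $0\in\mathcal M$, Lemma \ref{lem4.4} makes $g_0(0)=0$ the only available transition, so the trajectory of $0$ is $\{0\}$ and minimality forces $\mathcal M=\{0\}$, a contradiction. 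Hence $c=-1$ for every $x_0\in\mathcal M$, which is exactly \eqref{eq4.7.2}, and in particular $-1\in\mathcal M$.

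For the form of $b$, the point $-1\equiv 3\pmod 4$ lies in $\mathcal M$, so Lemma \ref{lem4.4} makes the transition $-1\stackrel{b}{\to}(-1-b)/4$ possible and places $(-1-b)/4$ in $\mathcal M$. Applying \eqref{eq4.7.2} to this new point and solving for $b$ then gives \eqref{eq4.7.1}. Divisibility by $3$ follows immediately modulo $3$: $4\equiv 1\pmod 3$ makes $4^{r+1}-1\equiv 0$ and each $4^i j_{i-1}\equiv j_{i-1}\in\{0,3\}\equiv 0\pmod 3$, so $3\mid b$. The main obstacle is the first step, because Lemma \ref{lem4.4} and Theorem \ref{th4.6} are formulated under the standing hypothesis $a\equiv b\pmod 4$ and cannot be invoked directly; once $b\equiv 3\pmod 4$ is established, everything else is a direct application of the machinery already developed.
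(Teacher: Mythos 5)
Your proposal is correct and ultimately follows the same route as the paper: the paper's own proof is a one-liner that observes the digit set $\{0,3\}$ has the single (non-trivial) extreme cycle $\{-1\}$ and then quotes Theorem \ref{th4.6} with $c=-1$ to get \eqref{eq4.7.1} and \eqref{eq4.7.2}, with $3\mid b$ following from $4\equiv 1\pmod 3$. What you do differently is supply two steps the paper leaves implicit. First, you verify the hypothesis $b\equiv 3\pmod 4$ required by Theorem \ref{th4.6}; your mod-$4$ case analysis (residues $2$, $1$, $3$ eliminated via digits $0$, $3$, $b$, then divisibility by $4$ forcing $\mathcal M=\{0\}$) is exactly the mechanism of Proposition \ref{pr4.2}, which you could simply have cited with the roles of the digits swapped, but the rederivation is sound. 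Second, you explicitly exclude the extreme cycle $\{0\}$ of $\{0,3\}$ (which the paper silently ignores when it says there is ``only one extreme cycle'') by noting that $0\in\mathcal M$ would force $\mathcal M=\{0\}$ by minimality; this is needed to conclude $c=-1$ and hence $-1\in\mathcal M$ before reading off \eqref{eq4.7.1} from the transition $-1\stackrel{b}{\rightarrow}(-1-b)/4$. One small inaccuracy: Assumptions 1.1 do not by themselves force $\alpha_3,\alpha_b\neq 0$; equation \eqref{eq2.2} only yields $|\alpha_3|^2+|\alpha_b|^2=1$ together with ``$b$ odd or $\alpha_b=0$,'' so the non-vanishing of the $\alpha_l$ is a standing non-degeneracy hypothesis (as in Proposition \ref{pr3}) rather than a consequence; granting it, your deduction that $b$ is odd is correct, and the rest of the argument goes through.
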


\begin{proof}
The digit set $\{0,3\}$ has only one extreme cycle $\{-1\}$ and Theorem \ref{th4.6} implies that $b$ and $x_0$ have the given form. Since $4^{n+1}-1$ is divisible by 3, it follows that $b$ is divisible by 3.
\end{proof}

\begin{example}\label{ex5.8}
We consider now $R=4$, $B=\{0,2\}$ and $L=\{0,3,b\}$ and we use Corollary \ref{cor4.7} to obtain the form of $b$, from \eqref{eq4.7.1}. We indicate in the caption the digits $j_0j_1\dots$. The points in the min-set are of the form in \eqref{eq4.7.2}. They are obtained as follows: start with $-1$, multiply by 4 and add 0 or 3 and repeat. Also the numbers,if $b>0$, should be between $-b/3$ and $0$, by Proposition \ref{pr3}. Thus, we have $-1, -4,-16,-13, -64,-61,-52,-49,\dots$.

 We illustrate the min-sets using some directed graphs: the nodes are the points in the min-sets, the edges are labeled by the digits in $L$ that make the transition possible. 

\begin{center}
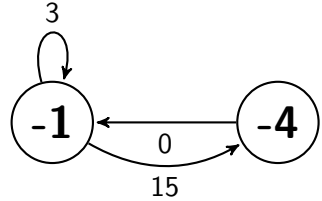
\begin{figure}[H]
\begin{tikzpicture}[->,>=stealth',shorten >=1pt,auto,node distance=3cm,
                    thick,main node/.style={circle,draw,font=\sffamily\Large\bfseries}]

  \node[main node] (1) {-1};
  \node[main node] (2) [right of=1] {-4};
  
  \path[every node/.style={font=\sffamily\small}]
    (1) edge [bend right] node[below] {15} (2)       
        edge [loop above] node {3} (1)
    (2) edge node  {0} (1) ;       
  \end{tikzpicture}
	\caption{$b=15$, $j_0j_1=03$}
\end{figure}
\end{center}

\begin{center}
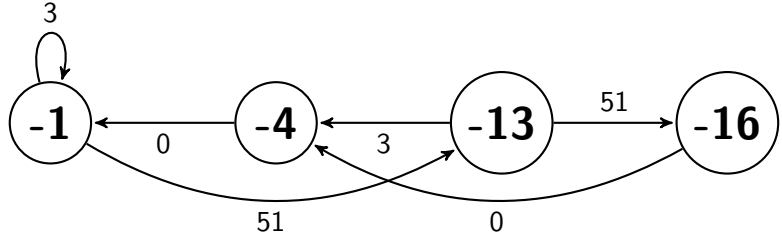
\begin{figure}[H]
\begin{tikzpicture}[->,>=stealth',shorten >=1pt,auto,node distance=3cm,
                    thick,main node/.style={circle,draw,font=\sffamily\Large\bfseries}]

  \node[main node] (1) {-1};
  \node[main node] (2) [right of=1] {-4};
  \node[main node] (3) [right of=2] {-13};
  \node[main node] (4) [right of=3] {-16};

  \path[every node/.style={font=\sffamily\small}]
    (1) edge [bend right] node[below] {51} (3)       
        edge [loop above] node {3} (1)
    (2) edge node  {0} (1)        
    (3) edge node  {3} (2)
        edge node {51} (4)
    (4) edge [bend left] node {0} (2);
\end{tikzpicture}

\caption{$b=51$, $j_0j_1=30$}
\end{figure}
\end{center}

\begin{center}
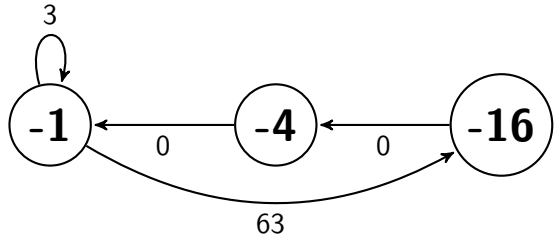
\begin{figure}[H]
\begin{tikzpicture}[->,>=stealth',shorten >=1pt,auto,node distance=3cm,
                    thick,main node/.style={circle,draw,font=\sffamily\Large\bfseries}]

  \node[main node] (1) {-1};
  \node[main node] (2) [right of=1] {-4};
  \node[main node] (3) [right of=2] {-16};

  \path[every node/.style={font=\sffamily\small}]
    (1) edge [bend right] node[below] {63} (3)       
        edge [loop above] node {3} (1)
    (2) edge node  {0} (1)        
    (3) edge node  {0} (2);
\end{tikzpicture}

\caption{$b=63$, $j_0j_1=00$}
\end{figure}
\end{center}

\begin{center}
\begin{figure}[H]
\begin{tikzpicture}[->,>=stealth',shorten >=1pt,auto,node distance=3cm,
                    thick,main node/.style={circle,draw,font=\sffamily\Large\bfseries}]

  \node[main node] (1) {-1};
  \node[main node] (2) [right of=1] {-4};
  \node[main node] (3) [right of=2] {-13};
  \node[main node] (4) [right of=3] {-16};
	\node[main node] (5) [below of=1] {-49};
	\node[main node] (6) [below of=2] {-52};
	\node[main node] (7) [below of=3] {-61};
	\node[main node] (8) [below of=4] {-64};

  \path[every node/.style={font=\sffamily\small}]
    (1) edge [bend right] node {195} (5)       
        edge [loop above] node {3} (1)
    (2) edge node  {0} (1)        
    (3) edge node  [above] {3} (2)
        edge [bend right] node [above]{195} (6)
    (4) edge [bend right] node [above] {0} (2)
		(5) edge node {3} (3)
				edge [bend right] node [below] {195} (7)
		(6) edge  node [below]{0} (3)
		(7) edge  node {3} (4)
				edge node {195} (8)
		(8) edge  node {0} (4)		
		;
		
\end{tikzpicture}

\caption{$b=195$, $j_0j_1j_2=330$}
\end{figure}
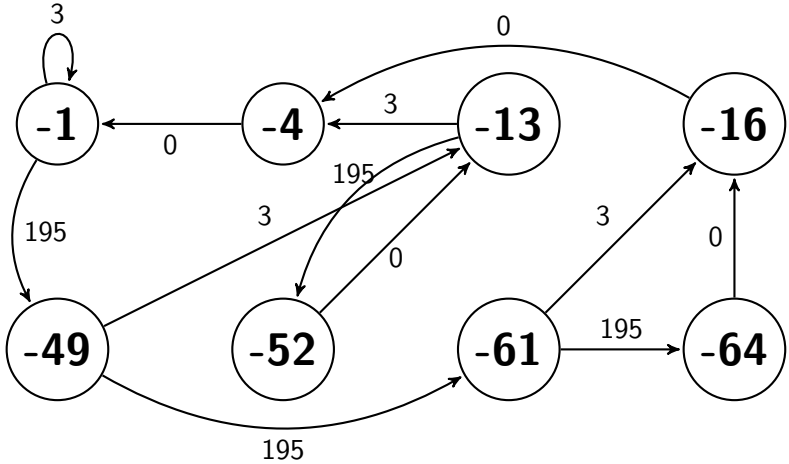
\end{center}

For $j_0j_1j_2=030$ we get $b=207$. In this case there is no non-trivial min-set. Indeed, a min-set contains the extreme cycle for $\{0,3\}$ which is $-1$. The transitions $-1\stackrel{207}{\rightarrow}-52\stackrel{0}{\rightarrow}-13\stackrel{207}{\rightarrow}-55$ are possible. So $-55$ is in the min-set, but $-55\not\equiv 3(\mod 4)$, a contradiction with Lemma \ref{lem4.4}.

For $j_0j_1j_2=300$, $b=243$, the transitions $-1\stackrel{243}{\rightarrow}-61\stackrel{243}{\rightarrow}-76\stackrel{0}{\rightarrow}-19$ are possible but $-19\not\equiv 3(\mod 4)$, a contradiction with Lemma \ref{lem4.4}
\begin{center}
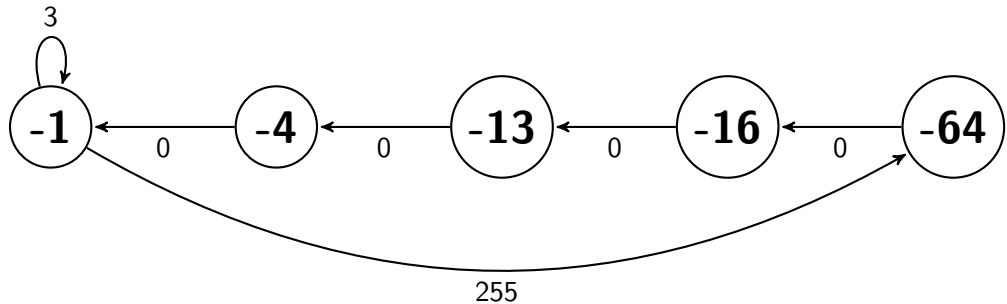
\begin{figure}[H]
\begin{tikzpicture}[->,>=stealth',shorten >=1pt,auto,node distance=3cm,
                    thick,main node/.style={circle,draw,font=\sffamily\Large\bfseries}]

  \node[main node] (1) {-1};
  \node[main node] (2) [right of=1] {-4};
  \node[main node] (3) [right of=2] {-13};
  \node[main node] (4) [right of=3] {-16};
	\node[main node] (5) [right of=4] {-64};

  \path[every node/.style={font=\sffamily\small}]
    (1) edge [bend right] node[below] {255} (5)       
        edge [loop above] node {3} (1)
    (2) edge node  {0} (1)        
    (3) edge node  {0} (2)        
    (4) edge node {0} (3)
		(5) edge node {0} (4);
\end{tikzpicture}

\caption{$b=255$, $j_0j_1j_2=000$}
\end{figure}
\end{center}

\begin{proposition}\label{pr4.9}
Let $R=4$, $B=\{0,2\}$ and $L=\{0,3,4^{n+1}-1\}$, for some $n\in\bn$. Then there is a min-set $\{-1,-4,-4^2,\dots, -4^{n}\}$. 
\end{proposition}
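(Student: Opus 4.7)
The plan is to verify directly that $\mathcal M := \{-1, -4, -4^2, \ldots, -4^n\}$ is a minimal invariant finite set, which makes it a min-set by definition. Since $a = 3$ and $b = 4^{n+1}-1$ both satisfy $a \equiv b \equiv 3 (\mod 4)$, we are in the setting of Theorem \ref{th4.6} with $a_0 = 3$, and Lemma \ref{lem4.4} applies: an integer point $\equiv 0 (\mod 4)$ only admits the transition with digit $0$, while an integer point $\equiv 3 (\mod 4)$ only admits the transitions with digits $3$ and $4^{n+1}-1$.

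Next I would check invariance by applying this case analysis to each element of $\mathcal M$. From $-1 \equiv 3 (\mod 4)$ we compute $g_3(-1) = -1 \in \mathcal M$ and $g_{4^{n+1}-1}(-1) = -4^n \in \mathcal M$. From $-4^k$ with $1 \leq k \leq n$, the sole possible transition uses digit $0$ and yields $g_0(-4^k) = -4^{k-1} \in \mathcal M$. The only routine check is to confirm that the transitions excluded by Lemma \ref{lem4.4} are indeed blocked: using $m_B(t) = (1 + e^{4\pi i t})/2$, this reduces to verifying that $4 g_l(x)$ is an odd integer in each blocked case, a short parity calculation. Hence $\mathcal M$ is invariant.

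For minimality, I would exhibit an explicit chain of possible transitions visiting every element of $\mathcal M$:
\[
-1 \stackrel{4^{n+1}-1}{\longrightarrow} -4^n \stackrel{0}{\longrightarrow} -4^{n-1} \stackrel{0}{\longrightarrow} \cdots \stackrel{0}{\longrightarrow} -4 \stackrel{0}{\longrightarrow} -1.
\]
Any $-4^k$ reaches $-1$ by $k$ applications of $g_0$, after which the chain above attains every element. Thus the orbit of every point of $\mathcal M$ is all of $\mathcal M$, which, combined with invariance and finiteness, shows that $\mathcal M$ is a min-set.

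I do not anticipate a significant obstacle. As a sanity check, one can alternatively derive the candidate set from Corollary \ref{cor4.7}: $b = 4^{n+1}-1$ fits \eqref{eq4.7.1} with all $j_i = 0$, and each $-4^k$ has the form \eqref{eq4.7.2} with all $l_i = 0$; however, establishing that this candidate actually forms a min-set still requires the direct invariance and minimality verifications above.
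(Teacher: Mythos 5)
Your proposal is correct and follows essentially the same route as the paper, whose entire proof consists of the three computations $g_{4^{n+1}-1}(-1)=-4^n$, $g_3(-1)=-1$, and $g_0(-4^k)=-4^{k-1}$. You simply spell out the invariance and minimality checks (via Lemma \ref{lem4.4} and the explicit cycle) that the paper leaves implicit.
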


\begin{proof}
Indeed $(-1-(4^{n+1}-1))/4=-4^n$, $(-1-3)/4=-1$ and $(-4^k)/4=4^{k-1}$ so we see that we have a min-set for the digit set $L$. 
\end{proof}
\end{example}

 \noindent {\it Acknowledgments}.  This work was partially supported by a grant from the Simons Foundation (\#228539 to Dorin Dutkay) 

\bigskip

\end{document}